\DeclareFontFamily{U}{wncyr}{}
\DeclareFontShape{U}{wncyr}{m}{n}{<->wncyr10}{}
\DeclareFontShape{U}{wncyr}{m}{it}{<->wncyi10}{}
\DeclareFontShape{U}{wncyr}{m}{sc}{<->wncysc10}{}
\DeclareFontShape{U}{wncyr}{b}{n}{<->wncyb10}{}
\DeclareTextCommand{\guillemotleft}{T1}{%
  {\fontencoding{U}\fontfamily{wncyr}\selectfont\symbol{"3C}}%
}
\DeclareTextCommand{\guillemotright}{T1}{%
  {\fontencoding{U}\fontfamily{wncyr}\selectfont\symbol{"3E}}%
}
\numberwithin{equation}{section}
\DeclareRobustCommand{\SkipTocEntry}[5]{}
\renewcommand{\d}{\,\mathrm{d}}
\def\p{\partial}
\def\d{\,\mathrm{d}}
\newtheorem{theorem}{Theorem}[section]
\newtheorem{proposition}[theorem]{Proposition}
\newtheorem{corollary}[theorem]{Corollary}
\newtheorem{lemma}[theorem]{Lemma}
\newtheorem{theorembis}{Theorem}
\theoremstyle{definition}
\theoremstyle{remark}
\newtheorem{remark}[theorem]{Remark}
\crefname{lemma}{Lemma}{Lemmas}
\crefname{proposition}{Proposition}{Propositions}
\begin{document}

\title[Strichartz Estimates for Baouendi--Grushin Operators]{Strichartz Estimates for a class of Baouendi--Grushin Operators}

\author{Nicolas Burq}
\address{Université Paris-Saclay, CNRS,  Laboratoire de mathématiques d’Orsay 91405 Orsay, France,  and Institut Universitaire de France}
\email{nicolas.burq@universite-paris-saclay.fr}
\author{Mickaël Latocca}
\address{Laboratoire de Mathématiques et de Modélisation d’Évry (LaMME), Université d’Évry, 23 Bd François Mitterrand, 91000 Évry-Courcouronnes, France}
\email{mickael.latocca@univ-evry.fr}

\thanks{
\textit{Acknowledgements}. This research was supported by the European research Council (ERC) under the
European Union’s Horizon 2020 research and innovation programme (Grant agreement
101097172 - GEOEDP).
}

\date{\today}

\begin{abstract}
\begin{sloppypar}
We prove Strichartz estimates for a class of Baouendi--Grushin operators acting either on the Euclidean space or a product of the type $\mathbb{R}^{d_1} \times M$, where $(M,g)$ is a smooth compact manifold with no boundary. We then give an application of these Strichartz estimates to the Cauchy theory for the associated Schrödinger equations.  
\end{sloppypar}
\end{abstract}

\maketitle

\section{Introduction}

\subsection{The Baouendi--Grushin operator} We consider a degenerate elliptic operator introduced by Baouendi and Grushin \cite{Baouendi67,Grushin71}, that we write $-\Delta _G$ and which is defined by  
\[
    -\Delta_G = - \Delta_x^2 - |x|^2\Delta_y^2, \quad \text{ for } (x,y) \in \mathbb{R}^{d_1} \times M,  
\]
where $(M,g)$ is either the Euclidean space $\mathbb{R}^{d_2}$ or a smooth compact manifold with no boundary.

We also define the associated Sobolev spaces $H^s_G=\{u \in L^2(\mathbb{R}^{d_1} \times M): \|u\|_{H^s_G} < \infty\}$ with   
\[
    \|u\|_{H^s_G}^2 = \|(\operatorname{id}-\Delta_G)^{\frac{s}{2}}\|_{L^2}^2.    
\]
Our original motivation is the study of the Cauchy problem for the following nonlinear Schrödinger equations: 
\begin{equation}
    \label{eq.NLS-G}
    \tag{NLS-G}
    \begin{aligned}
        i\partial_t u + \Delta_G u &= F(u) := |u|^{\kappa - 1}u \\
        u(0)&=u_0 \in H^s_G. 
    \end{aligned}
\end{equation}

In view of the Sobolev embedding $H^{s}_G \hookrightarrow L^{\infty}$, which holds as soon as $s>\frac{d_1+2d_2}{2}$ (see \cref{thm.sobolev}), it follows that \eqref{eq.NLS-G} is locally well-posed in $\mathcal{C}^0([0,T],H^{s}_G)$ for such values of $s$. We refer to this theory as the \textit{classical} Cauchy theory. 

The non-dispersive nature of \eqref{eq.NLS-G} in the case $(d_1,d_2)=(1,1)$ was observed in \cite{BGX}, so that up to the knowledge of the authors, no better Cauchy theory is known than the classical one. 

Our starting point is that as soon as $d_2 \geqslant 2$ one expects an improvement on the classical Cauchy theory by using dispersion on the $y$ direction by elementary means. 

\subsection{Main results}

Our main result is a collection of Strichartz estimates associated to the Schrödinger propagator $e^{it\Delta_G}$ and also for its fractional counterpart $e^{it(-\Delta_G)^{\sigma}}$. 

We say that a triple $(p,q,r) \in [2,\infty]^3$ is \textit{admissible} if the following scaling relation hold: 
\begin{equation}
    \label{eq.adm-S}
    \frac{2}{p} + \frac{d_1}{q} + \frac{2d_2}{r} = \frac{d_1+2d_2}{2} - \gamma_{q,r}, \text{ where}  
\end{equation}
\begin{equation}
    \label{eq.adm-S2}
    \gamma_{q,r} = (d_2+1)\left(\frac{1}{2} - \frac{1}{r}\right) + d_1\left(\frac{1}{2} - \frac{1}{q}\right), 
\end{equation}
and $(p,r)$ are restricted by 
\begin{equation}
    \label{eq.adm-S3}
    \frac{1}{d_2-1} > \frac{1}{2} - \frac{1}{r} > \begin{cases}
        \frac{1}{2d_2} & \text{ in the Euclidean case,} \\ \frac{1}{2d_2} + \frac{1}{d_2p} &\text{ in the compact manifold case}. 
    \end{cases} 
\end{equation}

Similarly, when $\sigma > 1$, a triple $(p,q,r) \in [2,\infty]^3$ is said to be $\sigma$-admissible if 
\begin{equation}
    \label{eq.adm-fS}
    \frac{2\sigma}{p} + \frac{d_1}{q} + \frac{2d_2}{r} = \frac{d_1+2d_2}{2} - \gamma_{q,r}, \text{ where}  
\end{equation}
\begin{equation}
    \label{eq.adm-fS2}
    \gamma_{q,r} = 
    d_2(2-\sigma)\left(\frac{1}{2} - \frac{1}{r}\right) + d_1\left(\frac{1}{2} - \frac{1}{q}\right),
\end{equation}
and $(p,r)$ are restricted by 
\begin{equation}
    \label{eq.adm-fS3}
    \frac{1}{d_2} > \frac{1}{2} - \frac{1}{r} > \begin{cases}
        \frac{1}{2d_2} & \text{ in the Euclidean case,} \\ \frac{1}{2d_2} + \frac{1}{d_2p} &\text{ in the compact manifold case}. 
    \end{cases} 
\end{equation}

Our main result is the following. 

\begin{theorembis}[Strichartz estimates for Schrödinger-Baouendi--Grushin propagators]\label{thm.main-Strichartz} \quad 
\begin{enumerate}[label=(\roman*)]
    \item Let $d_1 \geqslant 1$ and $d_2 \geqslant 2$. Let $(p,q,r)$ and $(a,b,c)$ be admissible triples such that $(p,a)\neq (2,2)$. Let $\varepsilon \ll 1$. Then for all $u \in H_G^{\gamma_{q,r}}(\mathbb{R}^{d_1}\times \mathbb{R}^{d_2})$ (or $H_G^{\gamma_{q,r}}(\mathbb{R}^{d_1}\times M)$ where $(M,g)$ is a smooth compact manifold with no boundary), and for all $G \in L^{p'}_TL^{q'}_xL^{r'}_y$, $F\in L^{a'}_TL^{b'}_xL^{c'}_y$, the following estimates hold: 
    \begin{equation}
        \label{eq.grushin-strichartz}
        \|e^{it\Delta_G}u\|_{L^{p}_TL^q_xL^r_y} \leqslant C(p,q,r) \|u\|_{H^{\gamma_{q,r}+\varepsilon}_G},
    \end{equation}
    \begin{equation}
        \label{eq.grushin-strichartz-dual}
        \left\|\int e^{-it'\Delta_G}G(t')\mathrm{d}t'\right\|_{L^2_{x,y}} \leqslant C \|(\operatorname{id}-\Delta_G)^{\frac{\gamma_{q,r}+\varepsilon}{2}}G\|_{L^{p'}_TL^{q'}_xL^{r'}_y},
    \end{equation}
    \begin{equation}
        \label{eq.grushin-strichartz-inhomogeneous}
        \left\|\int_0^t e^{i(t-t')\Delta_G}F(t')\mathrm{d}t'\right\|_{L^p_TL^q_xL^r_{y}} \leqslant C \|(\operatorname{id}-\Delta_G)^{\frac{\gamma_{q,r}+\gamma_{b,c}+\varepsilon}{2}}F\|_{L^{a'}_TL^{b'}_xL^{c'}_y}.
    \end{equation} 
    \item Let $d_1 \geqslant 1$, $d_2 \geqslant 1$ and $\sigma \in (1,2)$. Let $(p,q,r)$ and $(a,b,c)$ be $\sigma$-admissible triples such that $(p,a)\neq (2,2)$. Then \eqref{eq.grushin-strichartz}, \eqref{eq.grushin-strichartz-dual} and \eqref{eq.grushin-strichartz-inhomogeneous} hold with $\Delta_G$ replaced with $(-\Delta_G)^{\sigma}$ and where $\gamma_{q,r}$ stands for \eqref{eq.adm-fS2}. In the compact case, $\gamma_{q,r}$ should be replaced by $\gamma_{q,r} + \frac{2(\sigma - 1)}{p}$. 
\end{enumerate}
\end{theorembis}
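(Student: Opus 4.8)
The plan is to combine a partial Fourier/spectral reduction in the $y$ variable, the Mehler formula for rescaled harmonic oscillators, a stationary-phase analysis, and the abstract Keel--Tao machinery. First I would conjugate by the Fourier transform in $y$ (Euclidean case) or diagonalise $-\Delta_y$ on $M$ (compact case). For a fixed dual variable $\eta$ (eigenvalue $\lambda^2$), the operator $-\Delta_G$ becomes the rescaled harmonic oscillator $H_\lambda:=-\Delta_x+\lambda^2|x|^2$ on $\mathbb{R}^{d_1}$, with $H_\lambda=\lambda\,U_\lambda(-\Delta_x+|x|^2)U_\lambda^{-1}$ for the $L^2$-unitary dilation $U_\lambda$. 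Thus $-\Delta_G$ has joint ``spectrum'' of the product form $\lambda(2|k|+d_1)$, $k\in\mathbb{N}^{d_1}$; this product structure (rather than a sum) is exactly what makes the operator non-dispersive for $d_2=1$ and what one must exploit for $d_2\geqslant 2$. The propagator $e^{it\Delta_G}$ (resp. $e^{it(-\Delta_G)^\sigma}$) then acts fibrewise by $e^{-itH_\lambda}$ (resp. by $e^{-it(\lambda(2|k|+d_1))^\sigma}$ on the $k$-th Hermite mode), and a $TT^\ast$/Keel--Tao argument reduces \eqref{eq.grushin-strichartz}--\eqref{eq.grushin-strichartz-inhomogeneous} — the inhomogeneous bound \eqref{eq.grushin-strichartz-inhomogeneous} via Christ--Kiselev, legitimate since we exclude the endpoint $(p,a)=(2,2)$, the extra $\gamma_{b,c}$ coming from applying the dual homogeneous estimate to the source term — to a single frequency-localised dispersive estimate together with the trivial $L^2$ bound.

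For that dispersive estimate I would perform a Littlewood--Paley decomposition localising the $y$-frequency to $|\eta|\sim N$. On each piece the Mehler formula gives an explicit oscillatory kernel in $x$ of modulus $\lesssim\lambda^{d_1/2}|\sin(\lambda t)|^{-d_1/2}$, hence $\lesssim|t|^{-d_1/2}$ on the natural time scale $|t|\lesssim 1/N$; integrating this against $e^{i(y-y')\cdot\eta}$ over $|\eta|\sim N$ and running stationary phase in $\eta$ produces the dispersion in $y$. The decisive point is the curvature of the phase $\Psi(\eta)=(y-y')\cdot\eta+\Phi_{\mathrm{Mehler}}(t,x,x',\eta)$, governed for $(-\Delta_G)^\sigma$ by $|\eta|^\sigma$: when $\sigma=1$ the radial second derivative vanishes and only the $d_2-1$ spherical directions are non-degenerate — this is the origin of the $d_2+1$ and of the range $\frac12-\frac1r<\frac1{d_2-1}$ in \eqref{eq.adm-S2}--\eqref{eq.adm-S3} — whereas for $\sigma\in(1,2)$ all $d_2$ directions curve, improving the range to $\frac1{d_2}$ and replacing $d_2+1$ by $d_2(2-\sigma)$ in \eqref{eq.adm-fS2}. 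I expect the main obstacle to be making this oscillatory-integral estimate uniform across all the degeneracies: $\sin(\lambda t)$ near $0$, $x$ or $x'$ near the origin where the harmonic-oscillator phase degenerates, and the matching of $x$- and $y$-scales imposed by the Grushin anisotropy $(x,y)\mapsto(\lambda x,\lambda^2 y)$.

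With the dispersive estimate in hand, interpolating with $L^2$ and summing the dyadic pieces yields \eqref{eq.grushin-strichartz}: the $x$-integrability is recovered by the Sobolev embedding $H^{d_1(1/2-1/q)}_x\hookrightarrow L^q_x$ with no dispersive gain (hence the $d_1(1/2-1/q)$ term of $\gamma_{q,r}$), while the $y$-integrability uses the dispersive gain (the $(d_2+1)(1/2-1/r)$, resp. $d_2(2-\sigma)(1/2-1/r)$, term). Since the resulting bound is not scale invariant, summing the Littlewood--Paley series costs an arbitrarily small $\varepsilon$. For the compact manifold case I would replace the $\eta$-integral by a sum over the joint spectrum of $-\Delta_G$, control it via the Weyl law and spectral-cluster estimates for $-\Delta_y$ on $M$, and — following Burq--Gérard--Tzvetkov — run the dispersion only on semiclassical time windows $|t|\lesssim N^{-1}$ (resp. $|t|\lesssim N^{-\sigma}$); the $N^{1/p}$ loss incurred in returning to the unit time interval is precisely what produces the additional $\frac{1}{d_2 p}$ in \eqref{eq.adm-S3}, \eqref{eq.adm-fS3} and the correction $\frac{2(\sigma-1)}{p}$ in part (ii).
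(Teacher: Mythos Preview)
Your overall architecture --- partial Fourier in $y$, Hermite diagonalisation, $TT^\ast$/Keel--Tao, Christ--Kiselev for the inhomogeneous bound, Sobolev in $x$ for the $d_1(\tfrac12-\tfrac1q)$ part of $\gamma_{q,r}$, and the BGT-type gluing on short windows for the compact case --- matches the paper, and your reading of where the various exponents and the $\varepsilon$-loss come from is correct. The substantive divergence is at the dispersion step. The paper does \emph{not} use the Mehler kernel or any joint $(x,\eta)$ stationary phase. Instead it projects onto Hermite modes \emph{first}: on mode $m$ the propagator $e^{it\Delta_G}$ becomes the pure $y$-operator $e^{it(2m+d_1)|D_y|}$ (half-wave), and for $\sigma>1$ it is $e^{it((2m+d_1)|D_y|)^\sigma}$ (fractional Schr\"odinger in $y$). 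The dispersion estimate is then the classical one for the half-wave/fractional Schr\"odinger in $\mathbb{R}^{d_2}$, with no $x$ involved at all. The crucial point you do not isolate is that at fixed Grushin frequency $A$ one has $y$-frequency $I\sim A/(m+1)$, so the mode-wise Strichartz constant carries a factor $(m+1)^{-d_2(\frac12-\frac1r)}$; the lower bound in \eqref{eq.adm-S3}/\eqref{eq.adm-fS3} is exactly what makes $\sum_m (m+1)^{-2d_2(\frac12-\frac1r)}<\infty$, after which one sums in $A$ with the $\varepsilon$-loss.

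Your Mehler-based route is a genuinely different program, and as written it is incomplete. The Mehler phase depends on $|\eta|$ through $\sin(|\eta|t)$ and $\cos(|\eta|t)$, so the stationary-phase analysis in $\eta$ is not governed simply by $|\eta|^\sigma$; you yourself flag the uniformity across degeneracies as ``the main obstacle'' without indicating how to resolve it. Moreover, your proposal is internally inconsistent: you invoke Mehler to extract $|t|^{-d_1/2}$ dispersion in $x$, yet later say the $x$-integrability comes from Sobolev ``with no dispersive gain'' --- the latter is what the paper actually does, and if you commit to it there is no reason to touch Mehler at all. Finally, for $\sigma>1$ there is no Mehler-type formula for $e^{-it(H_\lambda)^\sigma}$, so you are forced back to the mode-wise picture anyway; the paper's approach treats $\sigma=1$ and $\sigma>1$ uniformly by working mode-by-mode from the start.
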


\begin{remark}[Use of the Christ--Kiselev Lemma]
Since the inhomogeneous estimate \eqref{eq.grushin-strichartz-inhomogeneous} is classically obtained from the homogeneous one~\eqref{eq.grushin-strichartz}, and its dual \eqref{eq.grushin-strichartz-dual} by an application of Christ--Kiselev's Lemma, we should impose $(p,a)\neq (2,2)$. However, up to losing an extra $\varepsilon$ of regularity we can relax this assumption by applying Sobolev's embedding in $t$ and use the fact that $i\p_t e^{it\Delta_G}F=-\Delta_GF$.
\end{remark}

\begin{remark}[Comparison with the Sobolev embedding] As a comparison, an application of the Sobolev embedding yields \eqref{eq.grushin-strichartz} with
\[
    \gamma_{q,r}:=\gamma_{\text{Sob}} = 2d_2\left(\frac{1}{2}-\frac{1}{r}\right) + d_1\left(\frac{1}{2} - \frac{1}{q}\right).
\]
Writing $\gamma_{\text{Stri}}$ for \eqref{eq.adm-S2} (resp. \eqref{eq.adm-fS2}) we have 
\[
    \gamma_{\text{Sob}}  - \gamma_{\text{Stri}}= \begin{cases}
        (d_2-1)\left(\frac{1}{2} - \frac{1}{r}\right) & \text{ in the case } \sigma = 1,\\ 
        d_2\sigma \left(\frac{1}{2} - \frac{1}{r}\right)& \text{ in the case } \sigma \neq 1,
    \end{cases}
\]
which is positive as soon as $r>2$, therefore \cref{thm.main-Strichartz} is an improvement on the Sobolev embedding, as expected. 

Note however that this gain only happens on the $y$ variable (it is in $y$ that we use the dispersion), whereas in $x$ we only perform Sobolev's embedding, therefore not improving the estimate in $x$. 
\end{remark}

\begin{remark}[On scaling and optimality] Condition \eqref{eq.adm-S} (resp. \eqref{eq.adm-fS}) can be seen from scaling. Indeed, plugging $u=u_{\lambda}$ in \eqref{eq.grushin-strichartz}, where $u_{\lambda}(t,x,y)=u(\lambda^2t,\lambda x,\lambda ^2y)$ (resp. $u_{\lambda}(t,x,y)=u(\lambda^{2\sigma}t,\lambda x,\lambda ^2y)$) one can see that \eqref{eq.grushin-strichartz} is invariant under such scaling provided \eqref{eq.adm-S} (resp. \eqref{eq.adm-fS}) holds. 

The upper bound of the restriction on the admissible values of $\frac{1}{2}-\frac{1}{r}$ given by \eqref{eq.adm-S3} (resp. \eqref{eq.adm-fS3}) stems from an application of the Hardy-Littlewood-Sobolev inequality (see the proof of \cref{prop.strichartz-modewise}). 
On the other hand, the lower bound on $\frac{1}{2}-\frac{1}{r}$ given by \eqref{eq.adm-S3} (resp. \eqref{eq.adm-fS3}) is required in our argument in order to ensure summability of mode-wise Strichartz estimates in order to infer frequency-localized estimates.

The $\varepsilon$ loss in \cref{thm.main-Strichartz} stems for an interpolation argument in \cref{coro.disp-mode-m} and an application of the Cauchy-Schwarz inequality in \cref{eq.CSapp}. We believe that a more subtle interpolation argument as well as the use of a Littlewood-Paley inequality adapted to the Baouendi--Grushin operator might be used to avoid this loss.  
\end{remark}

\begin{remark}[On the nondispersive case $d_2=1$] When $d_2=1$, the propagator $e^{it\Delta_G}$ is non-dispersive, which is consistent with the fact that $d_2=1$ is excluded from \cref{thm.main-Strichartz}. This is not surprising, as we ultimately rely on dispersion of the half-wave equation in the $y$ variable, which is non-dispersive in dimension $1$. This fact was observed in \cite{GG10} where the authors constructed an explicit example of non-dispersive $u(t)=e^{it\Delta_G}u_0$ by choosing $u_0$ localized in $y$ frequencies and supported by one Hermite mode, namely: 
    \[
        u_0(x,y)=N^{-\frac{1}{2}}\int_0^{\infty} e^{iy\eta - \frac{1}{2}\eta x^2 - \frac{\eta}{N^2}}\d\eta, 
    \]
which is such that $u(t,x,y)=u_0(x,y-t)$, therefore is non-dispersive as Lebesgue norms are conserved. We refer to the introduction of \cite{GG10} for more details. 
\end{remark}

Next, as an illustration of our dispersive estiamtes, we use these Strichartz estimates to improve on the classical local Cauchy theory for \eqref{eq.NLS-G}, and also  its fractional counterpart: 
\begin{equation}
    \label{eq.fNLS-G}
    \tag{FNLS-G}
    \begin{aligned}
        i\partial_tu +(-\Delta_G)^{\sigma} u &= |u|^{\kappa -1}u \\ 
        u(0) & = u_0\in H^s_G.  
    \end{aligned}
\end{equation}

For simplicity, we restrict our attention to $(d_1,d_2)=(1,2)$ and $\sigma \in [1,2)$. Let us introduce a space adapted to the Strichartz estimates: 
\[
    X^s_T := \displaystyle\bigcap_{(p,q,r) \;\;\sigma-\text{admissible}} L^p_T(-\Delta_G )^{-\frac{s-\gamma_{q,r}-\varepsilon}{2}}(L^q_xL^r_y).
\] 

\begin{theorembis}[Local Cauchy theory for \eqref{eq.NLS-G} and \eqref{eq.fNLS-G}]\label{thm.main-Cauchy} \quad 
\begin{enumerate}[label=(\roman*)]
    \item Assume $\kappa = 5$. Let $s>2$ and $u_0 \in H^s_G(\mathbb{R}_x\times \mathbb{R}^{2}_y)$. There exists $T=T(\|u_0\|_{H^s})>0$ and a unique solution $u \in X^s_T \cap \mathcal{C}^0([0,T],H^s_G)$ to \eqref{eq.NLS-G}.
    If $M$ is a smooth compact manifold of dimension $2$ with no boundary, then the same result holds for $u_0 \in H^s_G(\mathbb{R}\times M)$ and $s>2$.
    \item Assume $\kappa = 3$, $\sigma \in (1,2)$. Let $s>\frac{5}{2}-\sigma$, and $u_0 \in H^s_G(\mathbb{R}_x\times \mathbb{R}^{2}_y)$. Then there exists $T=T(\|u_0\|_{H^s})>0$ and a unique solution $u\in X^s_T\cap \mathcal{C}^0([0,T],H^s_G)$ to \eqref{eq.fNLS-G}. In particular when $s=\sigma \in (\frac{5}{4},2)$, these solutions are global in time.
\end{enumerate}
\end{theorembis}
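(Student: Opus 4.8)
\medskip
\noindent\textbf{Proof plan.} Both parts follow from a contraction mapping argument in the complete metric space $Y^s_T := X^s_T \cap \mathcal{C}^0([0,T],H^s_G)$ (with its natural norm), applied to the Duhamel map
\[
    \Phi(u)(t) = e^{it\Delta_G}u_0 - i\int_0^t e^{i(t-t')\Delta_G}F(u(t'))\,\d t', \qquad F(u)=|u|^{\kappa-1}u,
\]
with $\Delta_G$ replaced by $(-\Delta_G)^{\sigma}$ in part (ii). The linear term is harmless: the propagator is an isometry of $H^s_G$ commuting with $(\operatorname{id}-\Delta_G)^{s/2}$, so $\|e^{it\Delta_G}u_0\|_{\mathcal{C}^0_TH^s_G}=\|u_0\|_{H^s_G}$, while applying \cref{thm.main-Strichartz} to $(-\Delta_G)^{\frac{s-\gamma_{q,r}-\varepsilon}{2}}u_0$ for each ($\sigma$-)admissible triple $(p,q,r)$ gives $\|e^{it\Delta_G}u_0\|_{X^s_T}\lesssim\|u_0\|_{H^s_G}$ (the hypotheses $s>2$, resp. $s>\tfrac52-\sigma$, leave room for the $\varepsilon$). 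Everything thus reduces to estimating $\Phi(u)-e^{it\Delta_G}u_0$, and its differences in $u$, in $Y^s_T$.

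The key observation is that, estimating $\int_0^t$ by $\int_0^T$ and using \eqref{eq.grushin-strichartz} for the $X^s_T$ component, together with Minkowski's inequality and unitarity for the $\mathcal{C}^0_TH^s_G$ component, one has $\big\|\int_0^t e^{i(t-t')\Delta_G}G(t')\,\d t'\big\|_{Y^s_T}\lesssim\|G\|_{L^1_TH^s_G}$; this sidesteps the $\gamma_{b,c}$ derivative loss of \eqref{eq.grushin-strichartz-inhomogeneous} at the cost of an $L^1$ norm in time, harmless since a positive power of $T$ will be gained. So it suffices to prove $\|F(u)\|_{L^1_TH^s_G}\lesssim T^{\theta}\|u\|_{Y^s_T}^{\kappa}$ and its Lipschitz analogue. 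Since $\kappa\in\{3,5\}$, $F(u)$ is a polynomial in $(u,\bar u)$, and a fractional Leibniz (Kato--Ponce type) inequality for $(\operatorname{id}-\Delta_G)^{s/2}$ — valid because $-\Delta_G$ has a good functional calculus on $\mathbb{R}^{d_1}\times M$ — combined with Hölder in $(x,y)$ bounds $\|(\operatorname{id}-\Delta_G)^{s/2}F(u)(t)\|_{L^2_{x,y}}$ by $\|u(t)\|_{H^s_G}\|u(t)\|_{L^\infty_{x,y}}^{\kappa-1}$. The first factor lies in $\mathcal{C}^0_TH^s_G\subset Y^s_T$. Since the $L^2$-based Grushin--Sobolev spaces do not embed into $L^\infty$ below regularity $\tfrac{d_1+2d_2}{2}$, the low factors $\|u(t)\|_{L^\infty_{x,y}}$ are instead controlled via a ($\sigma$-)admissible Strichartz norm $L^{p_0}_TL^{q_0}_xL^{r_0}_y$ from the definition of $X^s_T$, with $r_0$ large, together with a mixed-norm Grushin--Sobolev embedding $W^{s-\gamma_{q_0,r_0}-\varepsilon,\,L^{q_0}_xL^{r_0}_y}_G\hookrightarrow L^\infty_{x,y}$ generalizing \cref{thm.sobolev}, which holds as soon as $s-\gamma_{q_0,r_0}-\varepsilon>\tfrac{d_1}{q_0}+\tfrac{2d_2}{r_0}$; the infimum over ($\sigma$-)admissible $(q_0,r_0)$ of $\gamma_{q_0,r_0}+\tfrac{d_1}{q_0}+\tfrac{2d_2}{r_0}$ (approached as $r_0\to\infty$, with $q_0$ free) equals $2$ when $\sigma=1$ and $\tfrac52-\sigma$ when $\sigma\in(1,2)$ — exactly the threshold in \cref{thm.main-Cauchy}.

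The power $T^{\theta}$ comes from the time integration: for $(d_1,d_2)=(1,2)$ the ($\sigma$-)admissibility relation forces $\tfrac{1}{p_0}<\tfrac{1}{\kappa-1}$ whenever $r_0<\infty$, so Hölder in time with $\tfrac{1}{\rho}+\tfrac{\kappa-1}{p_0}=1$ ($\rho<\infty$) places the high factor in $L^{\rho}_TL^2_{x,y}$, and since it actually lies in $L^\infty_TL^2_{x,y}$ one gains a factor $T^{1/\rho}>0$; it is here that the non-endpoint hypothesis $(p,a)\neq(2,2)$ in \cref{thm.main-Strichartz} is used. The Lipschitz bound $\|F(u)-F(v)\|_{L^1_TH^s_G}\lesssim T^{\theta}(\|u\|_{Y^s_T}+\|v\|_{Y^s_T})^{\kappa-1}\|u-v\|_{Y^s_T}$ is proved identically, using the pointwise bound on $|F(u)-F(v)|$ and its differentiated version. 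Banach's fixed point theorem then gives, for $T=T(\|u_0\|_{H^s_G})$, a unique fixed point in a ball of $Y^s_T$; a standard argument (continuity in $t$ of the Duhamel integral with $L^1_TH^s_G$ source, and an $L^2$-based uniqueness argument) upgrades this to $u\in\mathcal{C}^0([0,T],H^s_G)$ and to uniqueness in all of $Y^s_T$. The compact case is identical, the additional loss $\tfrac{2(\sigma-1)}{p}$ in \cref{thm.main-Strichartz} merely shifting the bookkeeping.

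For the global statement in part (ii), take $s=\sigma\in(\tfrac54,2)$, which satisfies $\sigma>\tfrac52-\sigma$, so the local theory applies at the energy regularity $H^\sigma_G$. In the defocusing case the conserved energy $E(u)=\tfrac12\|(-\Delta_G)^{\sigma/2}u\|_{L^2}^2+\tfrac{1}{\kappa+1}\|u\|_{L^{\kappa+1}}^{\kappa+1}$ is finite, because the $L^4$-critical Grushin--Sobolev regularity on $\mathbb{R}_x\times\mathbb{R}^2_y$ is $\tfrac54<\sigma$ so that $H^\sigma_G\hookrightarrow L^4$, and it is coercive; together with conservation of the mass $\|u(t)\|_{L^2}$ this yields a uniform-in-time bound on $\|u(t)\|_{H^\sigma_G}$, and iterating the local result (whose existence time depends only on this norm) produces a global solution. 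The main obstacle throughout is the nonlinear estimate of the second and third paragraphs: establishing a workable fractional Leibniz rule and mixed-norm Sobolev embedding in the anisotropic Grushin scale — which, unlike the classical scale, fails to embed into $L^\infty$ below regularity $\tfrac{d_1+2d_2}{2}$ — and threading the ($\sigma$-)admissible exponents so that the regularity budget and the time integrability close simultaneously at the claimed level.
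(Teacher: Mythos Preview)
Your proposal is correct and follows essentially the same route as the paper: Banach fixed point on the Duhamel map, reduction of the inhomogeneous term to an $L^1_TH^s_G$ bound via the trivial dual triple $(a,b,c)=(\infty,2,2)$, a fractional Leibniz estimate for $(\operatorname{id}-\Delta_G)^{s/2}$ (the paper cites \cite[Proposition~2.10]{LG22} for this), control of $\|u\|_{L^{\kappa-1}_TL^\infty_{x,y}}$ by a Strichartz norm plus a Grushin--Sobolev embedding, and globalization via energy/mass conservation. The only cosmetic difference is that the paper takes $q_0=r_0$ and invokes the isotropic embedding \eqref{eq.sobolev-Wkp} rather than the mixed-norm version you describe; since your numerology already allows $q_0=r_0$, this changes nothing.
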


\begin{remark} The purpose of \cref{thm.main-Cauchy} is only to exemplify that using standard methods, Strichartz estimates obtained in \cref{thm.main-Strichartz} imply corresponding improvements on the classical local Cauchy theory for \eqref{eq.NLS-G} (resp. \eqref{eq.fNLS-G}). In order to avoid lengthening the paper, we have restricted our attention to a few values of the involved parameters $s$, $\sigma$, $\kappa$. In particular, we deal with integer power nonlinearity, but this is only a technical restriction, which could be overcome: see for instance \cite{BG}, in which the authors develop frequency analysis tools adapted to the study of \eqref{eq.NLW-H}, similar treatment could be conducted in our case.  
\end{remark}

\subsection{Some related results} 

When $d_2=1$, as  explained in \cite{GG10}, the Schrödinger equation for the Baouendi--Grushin operator is non-dispersive. However, it is possible to obtain \cite[Theorem 1.6]{GMS23} anisotropic estimates of the form 
\begin{equation}
    \label{eq.restriction-Strichartz}
    \|e^{it\Delta_G}\|_{L^{\infty}_yL^p_tL^q_x} \lesssim \|u\|_{L^2_{x,y}},
\end{equation}
which hold whenever $2< q \leqslant p\leqslant\infty$ and $\frac{2}{p} + \frac{d_1}{q} = \frac{d_1+2}{2}$. The estimate \eqref{eq.restriction-Strichartz} is very different from estimates of the type \eqref{eq.grushin-strichartz} as integration on time and in the $y$ variable have been switched. Such estimates are typically obtained by using Fourier restriction methods based on \cite{Muller90} (see \cite{LS16} for an adaptation to the Baouendi--Grushin setting), and were obtained for the Heisenberg operator in \cite{BBG}. 

When dispersion is expected, which is our case for the Schrödinger equation as soon as $d_2 \geqslant 2$, a usual strategy to obtain Strichartz estimates is to prove dispersion estimates and apply a $TT^*$ argument. Such strategy was implemented for the free solutions to the wave equation posed on the Heisenberg group $\mathbb{H}^n$, 
\begin{equation}
    \label{eq.NLW-H}
    \tag{NLW-$\mathbb{H}^n$}
    \begin{aligned}
        \partial_t^2u - \Delta_{\mathbb{H}^1} u &= F(u) \\ 
        (u(0), \partial_t u(0)) & = (u_0,u_1) \in H^s \times H^{s-1}(\mathbb{H}^n).  
    \end{aligned}
\end{equation}
In terms of scaling this corresponds to $(d_1,d_2)=(2,1)$ for our Baouendi--Grushin setting. Dispersion and Strichartz estimates were established in \cite{BGX} and used in \cite{BG} to derive an improvement on the classical local Cauchy theory for \eqref{eq.NLW-H} when $F(u)$ is a derivative nonlinearity. 

\subsection{Structure of the article and notation}

The main result, \cref{thm.main-Strichartz} is proven in \cref{sec.stri} first in the Euclidean case, and then for compact manifolds. Some useful results are recalled in \cref{sec.prelim}. 
Finally, the Cauchy theory of \cref{thm.main-Cauchy} is obtained in \cref{sec.cauchy}.

In the following, we make use of common notation. In particular we will write $\langle z \rangle =\sqrt{1+|z|^2}$.

If $a \in [1, \infty]$ we write $a'\in [1,\infty]$ its Hölder conjugate, that is $\frac{1}{a'} + \frac{1}{a} = 1$. 

We write $L^pX$ as a shorthand for $L^p((0,T),X)$. 

\section{Some useful results}\label{sec.prelim} 

\subsection{Commutators and embeddings}

We recall that $[\p_{x_i},x_i\p_{y_j}]=\p_{y_j}$, which expresses the fact that a derivative in the $y$ direction can be obtained by a combination two order one derivatives associated to $-\Delta_G$. This fact is responsible for the following Sobolev embedding that we recall. 

\begin{theorem}[See \cite{FS74}]\label{thm.sobolev}
Let $s>\frac{d_1+2d_2}{2}$. Then the following embedding $H^{s}_G \hookrightarrow L^{\infty}$ holds. 
\end{theorem}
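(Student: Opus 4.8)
The plan is to deduce the Baouendi--Grushin Sobolev embedding $H^s_G \hookrightarrow L^\infty$ for $s > \frac{d_1 + 2 d_2}{2}$ from the sub-elliptic structure of $-\Delta_G$, using the commutator identity $[\partial_{x_i}, x_i \partial_{y_j}] = \partial_{y_j}$ recalled just above the statement. The point of this identity is that the vector fields $X_i := \partial_{x_i}$ together with $Y_{ij} := x_i \partial_{y_j}$ are first-order operators controlled by $(-\Delta_G + \mathrm{id})^{1/2}$ (indeed $-\Delta_G = \sum_i X_i^* X_i + \sum_{i,j} Y_{ij}^* Y_{ij}$ up to lower-order terms), and their brackets recover the missing $y$-derivatives: hence $\partial_{y_j}$ is a \emph{second}-order operator in the Baouendi--Grushin scale, which is why the $y$ variable counts with weight $2$ in the homogeneous dimension $Q = d_1 + 2 d_2$ appearing in the exponent.

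First I would recall the construction from \cite{FS74}: the operator $-\Delta_G$ generates a heat semigroup $e^{t \Delta_G}$ whose kernel $p_t(z, z')$ satisfies Gaussian-type bounds adapted to the Carnot--Carathéodory distance $d_{CC}$ associated to the family $\{X_i, Y_{ij}\}$, together with the volume estimate $|B_{CC}(z, \sqrt{t})| \simeq t^{Q/2}$ for small $t$ (with $Q = d_1 + 2 d_2$), reflecting the fact that balls of radius $\rho$ have volume $\simeq \rho^{d_1} \cdot \rho^{2 d_2}$. These two ingredients give the on-diagonal bound $\|p_t\|_{L^\infty} \lesssim t^{-Q/2}$ for $t \le 1$, and by interpolation/duality the ultracontractivity estimate $\|e^{t\Delta_G}\|_{L^1 \to L^\infty} \lesssim t^{-Q/2}$.

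Next, I would convert ultracontractivity into the Sobolev embedding in the standard way. Writing $(\mathrm{id} - \Delta_G)^{-s/2}$ via the subordination formula $(\mathrm{id} - \Delta_G)^{-s/2} = c_s \int_0^\infty t^{s/2 - 1} e^{-t} e^{t \Delta_G} \, \d t$, one estimates its kernel: the short-time part $t \le 1$ contributes $\int_0^1 t^{s/2 - 1} \|p_t\|_{L^\infty} \, \d t \lesssim \int_0^1 t^{s/2 - 1 - Q/2}\, \d t$, which converges precisely when $s > Q = d_1 + 2 d_2$; the long-time part $t \ge 1$ is harmless because of the $e^{-t}$ factor and $L^2$-boundedness of the semigroup. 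This shows $(\mathrm{id} - \Delta_G)^{-s/2} \colon L^2 \to L^\infty$ is bounded for $s > \frac{d_1 + 2 d_2}{2}$ — wait, the kernel bound must be combined with a Cauchy--Schwarz step: one uses $\|e^{t\Delta_G} u\|_{L^\infty} \le \|p_{t/2}\|_{L^2}^{1/2} \cdots$, more cleanly $\|(\mathrm{id}-\Delta_G)^{-s/2}\|_{L^2 \to L^\infty}^2 \lesssim \int_0^\infty \int_0^\infty (t t')^{s/2-1} e^{-t-t'} \|p_{t+t'}\|_{L^\infty}\, \d t \, \d t'$ after squaring and using the semigroup property with the kernel diagonal bound, and convergence again reduces to $s > \frac{d_1 + 2 d_2}{2}$. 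Equivalently one may simply invoke the general theorem of Varopoulos--Saloff-Coste--Coulhon relating the volume growth exponent $Q$ of a Dirichlet form to the embedding $H^{Q/2 + }\hookrightarrow L^\infty$, which \cite{FS74} establishes in this sub-Riemannian setting.

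The main obstacle is establishing the two geometric inputs rigorously: the heat kernel Gaussian upper bound and the homogeneous-dimension volume estimate $|B_{CC}(z, \rho)| \simeq \rho^{d_1 + 2 d_2}$ for the Grushin metric. The latter is genuinely $z$-dependent away from the singular set $\{x = 0\}$ — balls are ``fatter'' in $y$ when $|x|$ is large — so the clean power $\rho^{d_1 + 2 d_2}$ only holds for small $\rho$ after the local doubling property is invoked, and one must be careful that the relevant estimate for the embedding is the \emph{small-time} on-diagonal bound, which is uniform. Since the statement cites \cite{FS74}, the honest route is to quote the Folland--Stein machinery for this stratified/graded setting rather than reprove it; the proof in the paper is then essentially the subordination argument above, with all geometric content black-boxed.
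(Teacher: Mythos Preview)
The paper does not give a proof of this theorem at all; it simply states the result with the citation ``See \cite{FS74}'' and moves on. Your final paragraph correctly anticipates this: the honest route in the paper \emph{is} just to quote the sub-elliptic machinery rather than reprove it.

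As for the content of your sketch, the heat-kernel/subordination approach is a correct and standard way to derive the embedding, and the threshold $s>\tfrac{Q}{2}$ with $Q=d_1+2d_2$ falls out exactly as you (eventually) say: the clean version of your ``wait'' step is that the kernel of $(\mathrm{id}-\Delta_G)^{-s/2}$ has $L^2_{z'}$-norm controlled, via Minkowski and the semigroup identity $\|p_t(z,\cdot)\|_{L^2}^2=p_{2t}(z,z)\lesssim t^{-Q/2}$, by $\int_0^1 t^{s/2-1-Q/4}\d t$, which converges iff $s>Q/2$. One caveat worth flagging: the Baouendi--Grushin operator is not literally in the stratified-group setting of \cite{FS74} (that paper treats the Heisenberg group and the $\bar\partial_b$-complex), and as you note the Carnot--Carath\'eodory balls here are genuinely inhomogeneous away from $\{x=0\}$. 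The uniform small-time on-diagonal bound $p_t(z,z)\lesssim t^{-Q/2}$ you need does hold, but strictly speaking it rests on the later H\"ormander-operator machinery (Rothschild--Stein lifting, Nagel--Stein--Wainger ball estimates, or the Varopoulos--Saloff-Coste--Coulhon framework you mention) rather than on \cite{FS74} itself. So your proof is sound and your caution about black-boxing the geometric input is, if anything, more scrupulous than the paper's own citation.
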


\begin{corollary} Let $s>\frac{d_1+2d_2}{2}$ and $p\in [2,\infty]$. Then, for all $u \in W^{\frac{2s}{p},p}_G$ there holds 
\begin{equation}
    \label{eq.sobolev-Wkp}
    \|u\|_{L^{\infty}} \lesssim \|(\operatorname{id}-\Delta_G)^{\frac{s}{p}}u\|_{L^p}. 
\end{equation}
\end{corollary}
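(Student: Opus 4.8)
The plan is to recast \eqref{eq.sobolev-Wkp} as a mapping property of a Bessel-type operator and to establish it through the heat semigroup. Set $v:=(\operatorname{id}-\Delta_G)^{\frac{s}{p}}u$, so that $\|v\|_{L^p}$ is the right-hand side of \eqref{eq.sobolev-Wkp}; then the estimate is equivalent to the boundedness
\[
    (\operatorname{id}-\Delta_G)^{-\frac{s}{p}}\colon L^p(\mathbb{R}^{d_1}\times M)\longrightarrow L^\infty(\mathbb{R}^{d_1}\times M).
\]
For $p=\infty$ the exponent $\frac{2s}{p}$ vanishes and there is nothing to prove, and for $p=2$ this is exactly \cref{thm.sobolev}, so the whole point is the range $2<p<\infty$. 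Note that a naive complex interpolation between these two endpoints is not available here, since imaginary powers of $\operatorname{id}-\Delta_G$ are not bounded on $L^\infty$; one really has to exploit the regularising effect of $e^{t\Delta_G}$.

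For $2<p<\infty$ I would use the subordination identity (legitimate since $-\Delta_G\geqslant0$)
\[
    (\operatorname{id}-\Delta_G)^{-\alpha}=\frac{1}{\Gamma(\alpha)}\int_0^\infty t^{\alpha-1}e^{-t}\,e^{t\Delta_G}\d t,\qquad \alpha:=\frac{s}{p}>0,
\]
and estimate $\|e^{t\Delta_G}\|_{L^p\to L^\infty}$. Since the heat semigroup is a contraction on every $L^q$ (it is positivity preserving and $e^{t\Delta_G}1\leqslant1$), interpolating $\|e^{t\Delta_G}\|_{L^\infty\to L^\infty}\leqslant1$ with the on-diagonal bound $\|e^{t\Delta_G}\|_{L^2\to L^\infty}\lesssim t^{-\frac{d_1+2d_2}{4}}$ (for $0<t\leqslant1$) gives $\|e^{t\Delta_G}\|_{L^p\to L^\infty}\lesssim t^{-\frac{d_1+2d_2}{2p}}$ for $0<t\leqslant1$, while $\|e^{t\Delta_G}\|_{L^p\to L^\infty}$ stays bounded for $t\geqslant1$ (write $e^{t\Delta_G}=e^{(t-1)\Delta_G}e^{\Delta_G}$ and use $L^\infty$-contractivity). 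Plugging this into the subordination formula,
\[
    \big\|(\operatorname{id}-\Delta_G)^{-\frac{s}{p}}v\big\|_{L^\infty}\lesssim\Big(\int_0^1 t^{\frac{s}{p}-1-\frac{d_1+2d_2}{2p}}\d t+\int_1^\infty t^{\frac{s}{p}-1}e^{-t}\d t\Big)\|v\|_{L^p},
\]
and the first integral converges exactly when $\frac{s}{p}>\frac{d_1+2d_2}{2p}$, i.e. when $s>\frac{d_1+2d_2}{2}$ — precisely the hypothesis — while the second always converges. This proves the claim (for $v$ in a dense class first, then by density).

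The main obstacle is the on-diagonal estimate $\|e^{t\Delta_G}\|_{L^2\to L^\infty}\lesssim t^{-\frac{d_1+2d_2}{4}}$ for small $t$. In the Euclidean case $M=\mathbb{R}^{d_2}$ it can be extracted from \cref{thm.sobolev} by parabolic scaling: under $u\mapsto u_\lambda:=u(\lambda\,\cdot\,,\lambda^2\,\cdot\,)$ one has $\Delta_G u_\lambda=\lambda^2(\Delta_G u)_\lambda$, hence $e^{t\Delta_G}f_\lambda=(e^{\lambda^2 t\Delta_G}f)_\lambda$ and $\|f_\lambda\|_{L^2}=\lambda^{-\frac{d_1+2d_2}{2}}\|f\|_{L^2}$, which together yield $\|e^{\tau\Delta_G}\|_{L^2\to L^\infty}=\tau^{-\frac{d_1+2d_2}{4}}\|e^{\Delta_G}\|_{L^2\to L^\infty}$ for all $\tau>0$; and $\|e^{\Delta_G}\|_{L^2\to L^\infty}<\infty$ because $e^{\Delta_G}=e\,(\operatorname{id}-\Delta_G)^{-s/2}\big[(\operatorname{id}-\Delta_G)^{s/2}e^{-(\operatorname{id}-\Delta_G)}\big]$, where the bracket is bounded on $L^2$ by the spectral theorem ($\sup_{\mu\geqslant1}\mu^{s/2}e^{-\mu}<\infty$) and $(\operatorname{id}-\Delta_G)^{-s/2}\colon L^2\to L^\infty$ by \cref{thm.sobolev}. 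In the compact case there is no global dilation, but for small times the heat kernel of $-\Delta_G$ on $\mathbb{R}^{d_1}\times M$ still satisfies $\sup_z p_t(z,z)\lesssim t^{-\frac{d_1+2d_2}{2}}$, the local sub-Riemannian geometry attached to $-\Delta_G$ having homogeneous dimension $d_1+2d_2$; this is the classical small-time heat-kernel bound for subelliptic operators of Baouendi--Grushin type, and only the small-time regime is needed thanks to the $e^{-t}$ weight. Once this input is in place the rest of the argument is routine.
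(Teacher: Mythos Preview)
Your argument is correct and takes a genuinely different route from the paper. The paper simply invokes Stein's complex interpolation between $(\operatorname{id}-\Delta_G)^{-s/2}\colon L^2\to L^\infty$ (which is \cref{thm.sobolev}) and the identity on $L^\infty$. You instead write $(\operatorname{id}-\Delta_G)^{-s/p}$ via the subordination integral against $e^{t\Delta_G}$ and reduce the matter to the ultracontractive bound $\|e^{t\Delta_G}\|_{L^2\to L^\infty}\lesssim t^{-(d_1+2d_2)/4}$, which you recover from \cref{thm.sobolev} by anisotropic scaling in the Euclidean case and quote from subelliptic heat-kernel theory in the compact case. You are right that the paper's interpolation is delicate at the $L^\infty$ endpoint, since the imaginary powers $(\operatorname{id}-\Delta_G)^{i\tau}$ are not bounded on $L^\infty$ and hence the natural analytic family $T_z=(\operatorname{id}-\Delta_G)^{-sz/2}$ does not obviously verify Stein's hypotheses on the line $\operatorname{Re}z=0$; the paper's one-line proof glosses over this. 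Your heat-kernel route bypasses that difficulty entirely and makes the threshold $s>\tfrac{d_1+2d_2}{2}$ appear as a transparent integrability condition at $t=0$; the cost is that in the compact case you import an external (though classical) small-time on-diagonal heat-kernel bound for $-\Delta_G$ on $\mathbb{R}^{d_1}\times M$ rather than deriving everything from \cref{thm.sobolev} alone.
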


\begin{proof} This follows using Stein's complex interpolation theorem \cite{S56} between the Sobolev embedding  
\[
    \|(\operatorname{id}-\Delta_G)^{-\frac{s}{2}}u\|_{L^{\infty}_{x,y}} \lesssim \|u\|_{L^2_{x,y}}    
\]
and $\|u\|_{L^{\infty}_{x,y}} = \|u\|_{L^{\infty}_{x,y}}$.
\end{proof}

\begin{lemma}\label{lem.x-control}
For any $s\in [0,1]$, the following embedding $H^s_G \hookrightarrow L^2_yH^s_x$ is continuous.
\end{lemma}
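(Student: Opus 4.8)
The plan is to reduce the claimed embedding $H^s_G \hookrightarrow L^2_y H^s_x$ to an $L^2$-boundedness statement for a Fourier multiplier in the $x$-variable and then interpolate between the endpoints $s=0$ and $s=1$. First I would treat the case $s=1$. Here one needs $\|u\|_{L^2_y H^1_x} \lesssim \|u\|_{H^1_G}$, i.e. control of $\|\partial_{x_i} u\|_{L^2_{x,y}}$ and $\|u\|_{L^2_{x,y}}$ by $\|(\operatorname{id}-\Delta_G)^{1/2}u\|_{L^2_{x,y}}$. Since $-\Delta_G = -\Delta_x - |x|^2 \Delta_y$, an integration by parts gives the identity $\|\nabla_x u\|_{L^2}^2 + \||x|\nabla_y u\|_{L^2}^2 = \langle -\Delta_G u, u\rangle \le \|(\operatorname{id}-\Delta_G)^{1/2}u\|_{L^2}^2$, so the $\dot H^1_x$ part of the norm is immediately dominated, and adding $\|u\|_{L^2}^2$ handles the full $H^1_x$ norm. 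The case $s=0$ is the trivial identity $\|u\|_{L^2_y L^2_x} = \|u\|_{L^2_{x,y}} = \|u\|_{H^0_G}$.

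For intermediate $s \in (0,1)$ I would invoke complex interpolation. The cleanest route is Stein's complex interpolation theorem (already used in the excerpt, via \cite{S56}) applied to the analytic family of operators $z \mapsto T_z := \langle \xi_x\rangle^{z}\,(\operatorname{id}-\Delta_G)^{-z/2}$, where $\langle \xi_x\rangle^z$ denotes the Fourier multiplier in $x$ only; on the line $\Re z = 0$ this family is bounded on $L^2_{x,y}$ (it is a product of two $L^2$-isometries up to the imaginary-power factors, which are bounded on $L^2$ with polynomially-bounded-in-$\Im z$ norm), and on $\Re z = 1$ it is bounded on $L^2_{x,y}$ by the $s=1$ computation above (again up to a polynomially bounded factor coming from the imaginary parts). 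Interpolating at $z = s$ yields $\|\langle D_x\rangle^s u\|_{L^2_{x,y}} \lesssim \|(\operatorname{id}-\Delta_G)^{s/2}u\|_{L^2_{x,y}}$, which is exactly the asserted embedding $H^s_G \hookrightarrow L^2_y H^s_x$ once one notes $\|u\|_{L^2_y H^s_x}^2 = \|\langle D_x\rangle^s u\|_{L^2_{x,y}}^2$ by Plancherel in $x$.

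The main technical point to be careful about is the justification of the interpolation: one must check that $T_z$ is indeed an admissible analytic family in the sense of Stein, i.e. that the map $z\mapsto \langle T_z f, g\rangle$ is analytic and of admissible growth for $f,g$ in a suitable dense class (e.g. finite linear combinations of products of Schwartz functions in $x$ with smooth functions in $y$, or eigenfunctions of the relevant operator), and that the operator-norm bounds on the two boundary lines grow at most polynomially in $\Im z$ — this is where one uses that purely imaginary powers of $\operatorname{id}-\Delta_G$ are bounded on $L^2$ with such growth (spectral theorem) and likewise for $\langle D_x\rangle^{i\tau}$. An alternative that avoids an explicit analytic family is to observe directly that both $(\operatorname{id}-\Delta_G)^{s/2}$ and $\langle D_x\rangle^{s}$ can be compared via the quadratic-form inequality $\langle D_x\rangle^2 = \operatorname{id} - \Delta_x \le \operatorname{id} - \Delta_G$ (as nonnegative self-adjoint operators, since $-|x|^2\Delta_y \ge 0$), and then apply the Heinz–Löwner inequality: $A \le B$ for nonnegative self-adjoint $A,B$ implies $A^{s} \le B^{s}$ for $s\in[0,1]$, giving $\|\langle D_x\rangle^s u\|_{L^2}^2 = \langle \langle D_x\rangle^{2s}u,u\rangle \le \langle (\operatorname{id}-\Delta_G)^s u, u\rangle = \|(\operatorname{id}-\Delta_G)^{s/2}u\|_{L^2}^2$ at once. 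This second approach is shorter and I would present it as the primary argument, keeping the interpolation remark as motivation.
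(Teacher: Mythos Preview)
Your proof is correct and follows essentially the same approach as the paper: integration by parts for $s=1$, the trivial identity for $s=0$, and complex interpolation for intermediate $s$. The paper's proof is terser (it simply says ``complex interpolation with the $s=0$ case'' without spelling out the analytic family), while you additionally offer the Heinz--L\"owner/operator-monotonicity shortcut, which is a neat alternative not mentioned in the paper.
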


\begin{proof} First, observe for any smooth function $u$, we can integrate by parts to compute 
\begin{align*}
    \|\nabla_x u\|_{L^2_{x,y}}^2 &\leqslant \int_{\mathbb{R}^{d_1}\times\mathbb{R}^{d_2}}(|\nabla_x u|^2 + |x|^2|\nabla_yu|^2) \mathrm{d}x\mathrm{d}y = (-\Delta_G u,u)_{L^2_{x,y}} = \|u\|^2_{H^1_G},
\end{align*}
which shows $H^1_G \hookrightarrow L^2_{y}H^1_x$, i.e. the case $s=1$ of the lemma. The claimed result now follows from complex interpolation with the $s=0$ case.
\end{proof}

\subsection{A frequency decomposition adapted to the Baouendi--Grushin operator}

We outline a frequency decomposition for the Baouendi--Grushin operator acting on $\mathbb{R}^{d_1} \times \mathbb{R}^{d_2}$. In the case $\mathbb{R}^{d_1} \times M$ for smooth boundaryless compact manifolds $M$, one would simply follow the same arguments, that is: using a spectral decomposition of the Laplace operator on $M$ and expanding the Baouendi--Grushin operator along Hermite modes. In the following, we only detail the Euclidean case.  

Let $u \in \mathcal{S}(\mathbb{R}^{d_1}\times\mathbb{R}^2)$ and take the partial Fourier transform in $y$ to write 
\[
    \mathcal{F}_{y\to \eta}(-\Delta_Gu)(x;\eta)=(-\Delta_x + |x|^2|\eta|^2) \hat{u}(x;\eta),     
\]
where we have written $\hat{u}$ for $\mathcal{F}_{y\to\eta}(u)$. The operator $-\Delta_x + |x|^2|\eta|^2$ is a rescaled harmonic oscillator. Let $H = -\Delta_x + |x|^2$ be the usual harmonic oscillator acting on $\mathcal{S}(\mathbb{R}^{d_1})$. Since the spectrum of $H$ is pure-point, made of the eigenvalues $\lambda_m=2m+d_1$, $m\geqslant 0$, denote by $\{h_{m,k}\}_{k\in \Lambda_m}$ a basis of $L_x^2$-normalized eigenfunctions associated to the eigenvalue $\lambda_m$ (note that $\# \Lambda_m \sim m^{d_2-1}$). Then, we can see that for a fixed $|\eta|>0$, the eigenvalues of $\mathcal{F}_{y\to \eta}(-\Delta_G)(\cdot; \eta)$ are the $|\eta|(2m+d_1)$, $m\geqslant 0$ with $L^2_x$ normalized eigenfunctions given by  
\[
  \tilde{h}_{m,k}(x;\eta)=|\eta|^{\frac{d_1}{4}} h_{m,k}(|\eta|^{\frac{1}{2}}x), \quad k\in \Lambda_n. 
\]
Note that the factor $|\eta|^{\frac{d_1}{4}}$ stems for normalization in $L^2_x$.  

With $f_{m,k}(\eta) := (\hat{u}(\cdot, \eta),\tilde{h}_{m,k}(\cdot, \eta))_{L^2_x}$ we can write 
\[
    \hat{u}(x;\eta) = \sum_{\substack{m\geqslant 0 \\ k\in \Lambda_m}} f_{m,k}(\eta)\tilde{h}_{m,k} =: \sum_{\substack{m\geqslant 0 \\ k\in \Lambda_m}} \hat{u}_{m,k} = \sum_{m\geqslant 0} \hat{u}_m,     
\]
where $\hat{u}_m=\sum_{k\in\Lambda_m} \hat{u}_{m,k}$. Now $u_m:=\mathcal{F}^{-1}_{\eta \to y}(\hat{u}_m(x;\eta))$ so that  
\[
    u=\sum_{m \geqslant 0} u_m = \sum_{\substack{m\geqslant 0 \\ I \in 2^{\mathbb{Z}}}} u_{m,I} = \sum_{A \in 2^{\mathbb{N}}}u_A,     
\] 
where 
\[
    u_A=\sum_{\substack{m\geqslant 0 \\ A \leqslant 1+ (2m+d_1)I \leqslant 2A}} u_{m,I},
\]
with $u_{m,I}=\chi\left(\frac{|D_y|}{I}\right)u_m$ where $\chi$ is a nonnegative smooth bump function supported on $[\frac{1}{2},\frac{5}{2}]$ equal to $1$ on $[1,2]$ and such that $1=\sum_{I\in2^{\mathbb{Z}}}\chi\left(\frac{|\eta|}{I}\right)$ for all $\eta\in \mathbb{R}^{d_2}$. 

Because the $\left\{\tilde{h}_{m,k}\right\}_{\substack{m\geqslant 0 \\ k\in \Lambda_m}}$ are orthogonal in $L^2_x$ it follows that the $\{u_m\}_{m\geqslant 0}$ are orthogonal in $L^{2}_{x,y}$, so that in particular:  
\[
    \|u\|_{L_{x,y}^2}^2 = \sum_{m\geqslant 0} \|u_m\|^2_{L_{x,y}^2} \sim \sum_{\substack{m\geqslant 0 \\ I \in 2^{\mathbb{Z}}}}\|u_{m,I}\|^2_{L_{x,y}^2} \sim \sum_{A\in 2^{\mathbb{N}}} \|u_A\|^2_{L_{x,y}^2},     
\]
where we have used quasi-orthogonality of the $\{u_{m,I}\}_{I\in 2^{\mathbb{Z}}}$. 

Such a frequency decomposition is very useful for computing Sobolev norms, as 
\[
    \mathcal{F}_{y\to \eta}((-\Delta_G)^s u_{m,I}) = ((2m+d_1)|\eta|)^s \hat{u}_{m,I}.
\] 
If $|\eta| \sim I$ then $1+(2m+d_1)|\eta| \sim 1+(2m+1)I \sim A$ so that 
\begin{equation}
    \label{eq.Hs-freq}
    \|u\|_{H^s_G}^2 \sim \sum_{A \in 2^{\mathbb{N}}} A^s \|u_A\|_{L^2_{x,y}}^2. 
\end{equation}

\section{Strichartz estimates}\label{sec.stri} 

First, let us remark that \eqref{eq.grushin-strichartz-dual} follows from \eqref{eq.grushin-strichartz} by duality, and that similarly \eqref{eq.grushin-strichartz-inhomogeneous} follows using the so-called Christ--Kiselev lemma. Therefore, we only prove \eqref{eq.grushin-strichartz}, and in order to do it, we rely on the classical strategy by first obtaining frequency localized estimates. For now, let us work indifferently on $\mathbb{R}^{d_1} \times \mathbb{R}^{d_2}$ or $\mathbb{R}^{d_1} \times M$, and $\sigma \geqslant 1$. Fix some triple $(p_{\varepsilon},q,r)$ as in \cref{thm.main-Strichartz}. 

Using the triangle inequality on frequency decomposition and the Cauchy-Schwarz inequality we have: 
\begin{equation}
    \label{eq.CSapp}
    \|e^{it(-\Delta_G)^{\sigma}}u\|_{L^p_TL^q_xL^r_y} \leqslant \sum_{A\in 2^{\mathbb{N}}} \|e^{it(-\Delta_G)^{\sigma}}u_A\|_{L^p_TL^q_xL^r_y}\lesssim \sum_{A \in 2^{\mathbb{N}}} A^{\frac{\varepsilon}{2}}\|e^{it(-\Delta_G)^{\sigma}}u_A\|_{L^p_TL^q_xL^r_y}^2,     
\end{equation}
so that \eqref{eq.grushin-strichartz} is a consequence of 
\begin{equation}
    \label{eq.grushin-strichartz-A}
    \|e^{it(-\Delta_G)^{\sigma}}u_A\|_{L^p_TL^q_xL^r_y} \leqslant C A^{\frac{\gamma}{2}+\frac{\varepsilon}{4}}\|u\|_{L^{2}_{x,y}},    
\end{equation}
and \eqref{eq.Hs-freq}. 

The proof of \eqref{eq.grushin-strichartz} is itself a consequence of mode-wise Strichartz estimates. More precisely, we claim that \eqref{eq.grushin-strichartz} is a consequence of 
\begin{equation}
    \label{eq.grushin-strichartz-mode}
    \left\|e^{it((2m+d_1)|D_y|)^{\sigma}}\chi\left(\frac{|D_y|}{I}\right)u_m\right\|_{L^p_TL^q_xL^r_y} \leqslant C(A,m)\left\|\tilde{\chi}\left(\frac{|D_y|}{I}\right)u_m\right\|_{L^{2}_{x,y}},   
\end{equation}
where $I \in 2^{\mathbb{Z}}$ is such that $A \leqslant 1+(2m+d_1)I \leqslant 2A$, $\tilde{\chi}$ is a cutoff such that $\tilde{\chi}\chi = \tilde{\chi}$ and where 
\begin{equation}
    \label{eq.CA}
        C(A,m) = \begin{cases}
        \frac{A^{\frac{\gamma}{2}+\frac{\varepsilon}{4}}}{(m+1)^{d_2(\frac{1}{2}-\frac{1}{r})}} & \text{ in the Euclidean case,}\\ 
        \frac{A^{\frac{\gamma}{2} + \frac{\sigma-1}{p}+\frac{\varepsilon}{4}}}{(m+1)^{d_2(\frac{1}{2}-\frac{1}{r}) - \frac{1}{p}}} & \text{ in the compact manifold case.}
    \end{cases}  
\end{equation}
Since by assumption $d_2(\frac{1}{2}-\frac{1}{r})>\frac{1}{2}$ in the Euclidean case (resp. $d_2(\frac{1}{2}-\frac{1}{r}) - \frac{1}{p} > \frac{1}{2}$ in the compact manifold case), there holds $\sum_{m\geqslant 0} C(A,m)^2 \lesssim A^{\gamma + \frac{\varepsilon}{2}}$. Write 
\[
    u_A = \sum_{\substack{m\geqslant 0 \\ I: (m+1)I \sim A}} u_{m,I} =  \sum_{\substack{m\geqslant 0 \\ I: (m+1)I \sim A}} \tilde{\chi}\left(\frac{|D_y|}{I}\right)u_{m},    
\]
where $(m+1)I\sim A$ means $A \leqslant 1+(2m+d_1)I \leqslant 2A$, and that 
\[ 
    e^{it(-\Delta_G)^{\sigma}}\tilde{\chi}\left(\frac{|D_y|}{I}\right)u_{m} = e^{it((2m+1)|D_y|)^{\sigma}}\tilde{\chi}\left(\frac{|D_y|}{I}\right)u_{m},
\]
so that we can see that \eqref{eq.grushin-strichartz-A} follows from \eqref{eq.grushin-strichartz-mode} by an application of the triangle inequality and the Cauchy-Schwarz inequality.  

In the Euclidean case, \eqref{eq.grushin-strichartz-mode} will follow from \cref{prop.strichartz-modewise} applied to $\tilde{\chi}\left(\frac{|D_y|}{I}\right)u_m$ in place of $u_m$, and the fact that 
\[
    \left\|\langle\p_x \rangle ^{d_1(\frac{1}{2}-\frac{1}{q})}\tilde{\chi}\left(\frac{|D_y|}{I}\right)u_m\right\|_{L^{2}_{x,y}} \leqslant A^{\frac{d_1}{2}\left(\frac{1}{2}-\frac{1}{q}\right)} \|u_{m,I}\|_{L^{2}_{x,y}},
\]
which follows by applying \cref{lem.x-control}. 

Similarly, in the compact manifold case, \eqref{eq.grushin-strichartz-mode} will follow from \cref{prop.strichartz-modewise-torus} applied to $\tilde{\chi}\left(\frac{|D_y|}{I}\right)u_m$ in place of $u_m$, which yields \eqref{eq.grushin-strichartz-mode} on a time-interval $[0,\frac{c}{(m+1)A^{\sigma -1}}]$ with constant
\[
    C = \frac{1}{((m+1)A^{\sigma - 1})^{1/p}}C(A,m),  
\]
so that gluing the $L^p_t$ estimates on $\sim (m+1)A^{\sigma - 1}$ intervals yields \eqref{eq.grushin-strichartz-mode}. 

\subsection{Estimates on the Euclidean space}

The goal of this paragraph is to prove the following mode-wise Strichartz estimate. 

\begin{proposition}[Mode Strichartz estimate: Euclidean case]\label{prop.strichartz-modewise}
Let $m\geqslant 0$ and $I \in 2^{\mathbb{Z}}$ such that $A \leqslant 1+(2m+d_1)I \leqslant 2A$. 
Let $(p,q,r)$ be admissible (resp. $\sigma$-admissible). Then for all $u_m \in \mathcal{S}(\mathbb{R}_x^{d_1}\times\mathbb{R}_y^{d_2})$ there holds  
\begin{equation}
    \label{eq.grushin-strichartz-modewise}
    \left\|e^{it((2m+d_1)|D_y|)^{\sigma}}\chi\left(\frac{|D_y|}{I}\right)u_m\right\|_{L^p_TL^q_xL^r_y} \leqslant C'(A,m)\left\|\langle\p_x \rangle ^{(d_1+\varepsilon)(\frac{1}{2}-\frac{1}{q})} u_m\right\|_{L^{2}_{x,y}},   
\end{equation}
where we have set 
\begin{equation}
    \label{eq.def-CIm}
    C'(A,m)= \begin{cases}
        \left(\frac{A^{\frac{d_2+1}{2}}}{(m+1)^{d_2}} \right)^{\frac{1}{2}-\frac{1}{r}} & \text{ in the case } \sigma = 1, d_2 \geqslant 2, \\ 
        \left(\frac{A^{\frac{d_2}{2}(2-\sigma)}}{(m+1)^{d_2}} \right)^{\frac{1}{2}-\frac{1}{r}} & \text{ in the case } \sigma > 1, d_2 \geqslant 1.
    \end{cases}    
\end{equation}
and $\epsilon >0$ when $q=+\infty$ while we can take $\epsilon =0$ when $q<+\infty$.
\end{proposition}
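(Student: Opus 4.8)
The plan is to prove \eqref{eq.grushin-strichartz-modewise} by a $TT^{*}$ argument, after reducing it to a fixed-frequency dispersive estimate for the propagator $U_m(t):=e^{it((2m+d_1)|D_y|)^{\sigma}}$. The key structural observation is that $U_m(t)$ acts only in the $y$ variable (the coefficient $c:=2m+d_1$ being a constant), so $x$ is an inert parameter and there is \emph{no} $x$-smoothing; consequently the $x$-gain must come entirely from a Sobolev embedding on the data, and for a $TT^{*}$ (hence self-dual) scheme to close, the middle norm must be $L^2_x$. So the first step is: for each fixed $t$, apply the Sobolev embedding $W^{s,2}(\mathbb R^{d_1}_x)\hookrightarrow L^q(\mathbb R^{d_1}_x)$ with $s=(d_1+\varepsilon)(\tfrac12-\tfrac1q)$ to the $L^r_y$-valued function $x\mapsto U_m(t)\chi(|D_y|/I)u_m(t,x,\cdot)$ (the scalar proof via the Bessel potential kernel carries over verbatim to Banach-valued functions, with the usual $\varepsilon>0$ needed only at $q=+\infty$), and commute $\langle\p_x\rangle^s$ past $U_m(t)$. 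This reduces \eqref{eq.grushin-strichartz-modewise} to
\[
    \left\|U_m(t)\chi\!\left(\tfrac{|D_y|}{I}\right)v\right\|_{L^p_TL^2_xL^r_y}\leqslant C'(A,m)\,\|v\|_{L^2_{x,y}},\qquad v\in\mathcal S.
\]
Recall also the elementary computation that \eqref{eq.adm-S} (resp.\ \eqref{eq.adm-fS}) is \emph{equivalent} to $\tfrac2p=(d_2-1)(\tfrac12-\tfrac1r)$ when $\sigma=1$ (resp.\ $\tfrac2p=d_2(\tfrac12-\tfrac1r)$ when $\sigma>1$), and that the upper bound in \eqref{eq.adm-S3} (resp.\ \eqref{eq.adm-fS3}) is exactly $p>2$; both facts will be used below.

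Next I establish the dispersive estimate in $y$. Writing $K_t$ for the convolution kernel of $U_m(t)\chi(|D_y|/I)$ on $\mathbb R^{d_2}_y$, i.e.\ $K_t(y)=\int e^{iy\cdot\eta+it(c|\eta|)^{\sigma}}\chi(|\eta|/I)\,\d\eta$, I rescale $\eta=I\zeta$ and apply stationary phase on the annulus $|\zeta|\sim1$, using $cI\sim A$ (from $A\le 1+cI\le 2A$):
\[
    \|K_t\|_{L^\infty_y}\ \lesssim\ \begin{cases} I^{d_2}\big(1+|t|A\big)^{-\frac{d_2-1}{2}}, & \sigma=1,\\[1mm] I^{d_2}\big(1+|t|A^{\sigma}\big)^{-\frac{d_2}{2}}, & \sigma>1.\end{cases}
\]
For $\sigma=1$ this is the classical wave-type kernel bound: the phase $y\cdot\eta+tc|\eta|$ has Hessian of rank $d_2-1$ in $\eta$ (one loses the radial direction, which forces the exponent $\tfrac{d_2-1}{2}$ — this is precisely why $d_2\ge 2$ is needed), and the bound is uniform in $y$. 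For $\sigma>1$, the phase $|\zeta|^{\sigma}$ has non-degenerate Hessian for $|\zeta|\sim 1$ since $\sigma\notin\{0,1\}$, so full non-degenerate stationary phase gives the exponent $\tfrac{d_2}{2}$, again uniformly in $y$. Interpolating (Riesz--Thorin) with the unitarity bound $\|U_m(t)\chi(|D_y|/I)\|_{L^2_y\to L^2_y}\le 1$ yields, with $\delta:=\tfrac12-\tfrac1r$, $\beta:=1,\ \alpha:=(d_2-1)\delta$ if $\sigma=1$ and $\beta:=\sigma,\ \alpha:=d_2\delta$ if $\sigma>1$,
\[
    \left\|U_m(t-s)\chi\!\left(\tfrac{|D_y|}{I}\right)\right\|_{L^{r'}_y\to L^r_y}\ \lesssim\ D(t-s):=I^{2d_2\delta}\big(1+|t-s|A^{\beta}\big)^{-\alpha},
\]
and, since the cutoff acts only in $y$, the same bound holds as an operator from $L^2_x(L^{r'}_y)$ to $L^2_x(L^{r}_y)$ (square the $L^r_y$ bound and integrate in $x$).

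Now I run the $TT^{*}$ argument: it suffices to bound $G\mapsto\int U_m(t-s)\chi(|D_y|/I)^2\,G(s)\,\d s$ from $L^{p'}_T(L^2_xL^{r'}_y)$ to $L^{p}_T(L^2_xL^{r}_y)$. By the previous step and Minkowski's inequality for Bochner integrals, the $L^2_xL^r_y$-norm of this operator at time $t$ is $\lesssim (D*h)(t)$ with $h(s):=\|G(s)\|_{L^2_xL^{r'}_y}$. Extending the time integration to $\mathbb R$, one has $D\in L^{p/2,\infty}(\mathbb R)$ with $\|D\|_{L^{p/2,\infty}(\mathbb R)}\lesssim I^{2d_2\delta}A^{-2\beta/p}$: indeed, since the scaling relation \eqref{eq.adm-S} (resp.\ \eqref{eq.adm-fS}) says exactly $\alpha=\tfrac2p$, the function $\tau\mapsto(1+|\tau|A^{\beta})^{-\alpha}$ lies in $L^{p/2,\infty}$ with norm $\sim A^{-2\beta/p}$. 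The generalized Young/Hardy--Littlewood--Sobolev inequality $L^{p/2,\infty}*L^{p'}\hookrightarrow L^{p}$ — valid precisely because $2<p<\infty$ (the upper bound in \eqref{eq.adm-S3}/\eqref{eq.adm-fS3} gives $p>2$, the lower bound gives $p<\infty$) — then gives $\|D*h\|_{L^{p}_T}\lesssim I^{2d_2\delta}A^{-2\beta/p}\|h\|_{L^{p'}_T}$. Hence the $TT^{*}$ operator has norm $\lesssim I^{2d_2\delta}A^{-2\beta/p}$, so $\|U_m(t)\chi(|D_y|/I)v\|_{L^p_TL^2_xL^r_y}\lesssim I^{d_2\delta}A^{-\beta/p}\|v\|_{L^2_{x,y}}$, and restricting back to $[0,T]$ keeps the same bound. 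Finally insert $I\sim A/(m+1)$ and use the scaling relation once more to rewrite $\beta/p$ (equal to $\tfrac{d_2-1}{2}\delta$ if $\sigma=1$, to $\tfrac{d_2\sigma}{2}\delta$ if $\sigma>1$): the prefactor $I^{d_2\delta}A^{-\beta/p}$ becomes exactly $\big(A^{(d_2+1)/2}/(m+1)^{d_2}\big)^{\delta}$ when $\sigma=1$ and $\big(A^{d_2(2-\sigma)/2}/(m+1)^{d_2}\big)^{\delta}$ when $\sigma>1$, i.e.\ $C'(A,m)$ as in \eqref{eq.def-CIm}.

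The main obstacle is the dispersive estimate: one must extract the sharp time decay from stationary phase uniformly in the linear variable $y$, and correctly identify that in the half-wave case $\sigma=1$ the phase degenerates in the radial direction, so the exponent is $\tfrac{d_2-1}{2}$ rather than $\tfrac{d_2}{2}$ — this is the analytic origin of the hypothesis $d_2\ge 2$ (and, through the interpolation, of the precise power of $A$ in $C'(A,m)$). A secondary, more bookkeeping-level point is that the admissibility scaling relation forces the Hardy--Littlewood--Sobolev exponent to be exactly critical, so one is obliged to use the weak-type ($L^{p/2,\infty}$) version and hence the strict inequality $p>2$ supplied by \eqref{eq.adm-S3}/\eqref{eq.adm-fS3}.
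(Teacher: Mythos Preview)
Your proof is correct and follows the same overall strategy as the paper (dispersive estimate in $y$ followed by a $TT^*$ argument and Hardy--Littlewood--Sobolev), but you organize the $x$ variable differently. The paper builds the weight $\langle\partial_x\rangle^{-(d_1+\varepsilon)(\frac12-\frac1q)}$ into the operator $T_m$ itself and, in \cref{coro.disp-mode-m}, proves an $L^{q'}_xL^{r'}_y\to L^q_xL^r_y$ dispersive estimate by Stein complex interpolation between the $q=2$ and $q=\infty$ endpoints (the latter using the Sobolev embedding $W^{d_1+\varepsilon,1}_x\hookrightarrow L^\infty_x$ together with a Minkowski swap); the $TT^*$ argument is then run directly in $L^p_TL^q_xL^r_y$. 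You instead peel off the $x$-Sobolev embedding $H^{(d_1+\varepsilon)(\frac12-\frac1q)}_x\hookrightarrow L^q_x$ at the outset, reduce to $q=2$, and run $TT^*$ purely in $L^p_TL^2_xL^r_y$ using only the $y$-dispersive bound. Your route is a bit more elementary (no Stein interpolation needed) and makes the claim ``$\varepsilon=0$ when $q<\infty$'' transparent, since the critical Sobolev embedding in $\mathbb{R}^{d_1}$ holds at $s=d_1(\tfrac12-\tfrac1q)$ for $q<\infty$; the paper's interpolation, as written, carries the $\varepsilon$ from the $q=\infty$ endpoint through to intermediate $q$. The two arguments are otherwise equivalent, including the identification $\alpha=\tfrac2p$ and the use of the weak-type HLS inequality under the restriction $2<p<\infty$ coming from \eqref{eq.adm-S3}/\eqref{eq.adm-fS3}.
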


The proof of \eqref{eq.grushin-strichartz-modewise} follows the classical strategy of obtaining dispersion estimates and using an abstract $TT^*$ argument to obtain the time-space estimate. The dispersive estimate relies on the dispersion of the fractional Schrödinger equation (when $d_2 \geqslant 1$) and the dispersion of the half-wave equation (when $d_2 \geqslant 2$). 
 
\begin{lemma}[Dispersion estimate] Let $d\geqslant 1$, $N\geqslant 1$ and $\chi$ be a smooth cutoff function supported on $[1,2]$. Let $v \in L^1(\mathbb{R}^d)$. 
\begin{enumerate}[label=(\roman*)]
    \item If $\sigma >1$, then for all $\tau > 0$ there holds    
    \begin{equation}
        \label{eq.disp-fractional}
        \left\|\exp\left(i\tau N\left(\frac{|D_y|}{N}\right)^{\sigma}\right)\chi\left(\frac{|D_y|}{N}\right)v\right\|_{L^{\infty}_y} \leqslant \frac{CN^{d}}{(1+N |\tau|)^{\frac{d}{2}}}\|v\|_{L^1_y}.  
    \end{equation}
    \item In case $\sigma = 1$ and $d \geqslant 2$, for all $\tau > 0$ there holds  
    \begin{equation}
        \label{eq.disp-half}
        \left\|\exp\left(i\tau N\frac{|D_y|}{N}\right)\chi\left(\frac{|D_y|}{N}\right)v\right\|_{L^{\infty}_y} \leqslant \frac{CN^{d}}{(1+N |\tau|)^{\frac{d-1}{2}}}\|v\|_{L^1_y}. 
    \end{equation} 
\end{enumerate}    
\end{lemma}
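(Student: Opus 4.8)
The dispersion estimates are standard stationary/non-stationary phase statements for Fourier multipliers, and I would prove them by writing the operator as convolution against an explicit oscillatory integral kernel. Set $N \geq 1$ and let $\chi$ be supported on $[1,2]$. The operator on the left-hand side has convolution kernel
\[
    K_\tau(z) = \frac{1}{(2\pi)^d}\int_{\mathbb{R}^d} e^{i z\cdot\xi}\, e^{i\tau N (|\xi|/N)^\sigma}\, \chi\!\left(\tfrac{|\xi|}{N}\right) \d\xi,
\]
so that by Young's inequality it suffices to prove $\|K_\tau\|_{L^\infty_y} \lesssim N^d (1+N|\tau|)^{-\mu}$ with $\mu = d/2$ in case (i) and $\mu = (d-1)/2$ in case (ii). After the change of variables $\xi = N\zeta$, this becomes
\[
    K_\tau(z) = \frac{N^d}{(2\pi)^d}\int_{\mathbb{R}^d} e^{iN(z\cdot\zeta + \tau |\zeta|^\sigma)}\, \chi(|\zeta|)\d\zeta,
\]
with the integration effectively over the annulus $|\zeta| \sim 1$. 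The factor $N^d$ is already present; everything reduces to showing the remaining oscillatory integral is $O((1+N|\tau|)^{-\mu})$ uniformly in $z$.

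\textbf{The two cases.} When $N|\tau| \lesssim 1$ the bound is trivial since the integrand is bounded with compact support, so assume $N|\tau| \gg 1$ and write $\lambda = N|\tau|$ as the large parameter. In case (i), $\sigma > 1$: the phase is $\Phi(\zeta) = z\cdot\zeta \pm |\tau|\,|\zeta|^\sigma$ (sign depending on $\mathrm{sgn}\,\tau$), rescaled; on the annulus $|\zeta|\sim1$ the Hessian of $|\zeta|^\sigma$ is $\sigma|\zeta|^{\sigma-2}\big((\sigma-2)\tfrac{\zeta\otimes\zeta}{|\zeta|^2} + \mathrm{Id}\big)$, which is nondegenerate precisely because $\sigma \neq 1$ (its eigenvalues are $\sigma(\sigma-1)|\zeta|^{\sigma-2}$ in the radial direction and $\sigma|\zeta|^{\sigma-2}$ in the angular directions, all nonzero for $\sigma>1$). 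Hence stationary phase in all $d$ variables gives decay $\lambda^{-d/2}$. One must check uniformity in $z$: if $|z|$ is comparable to $|\tau|$ there may be a (nondegenerate) critical point and one gets $\lambda^{-d/2}$; if $|z|$ is much larger or much smaller than $|\tau|$ the phase has no critical point on the annulus and repeated integration by parts via the non-stationary phase lemma gives arbitrarily fast decay. In case (ii), $\sigma = 1$, $d\geq 2$: the phase is $z\cdot\zeta \pm |\tau|\,|\zeta|$; the function $|\zeta|$ has Hessian $|\zeta|^{-1}(\mathrm{Id} - \zeta\otimes\zeta/|\zeta|^2)$ of rank $d-1$ (the radial direction is degenerate, reflecting the cone), so stationary phase in the $d-1$ angular variables plus handling the radial variable (either it is non-stationary, giving extra decay, or one integrates first in the radial direction where $\chi$ localizes) yields exactly $\lambda^{-(d-1)/2}$. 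This is the familiar wave-equation dispersion rate, and $d\geq 2$ is needed for $(d-1)/2>0$.

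\textbf{Main obstacle.} The only genuinely delicate point is the uniformity of the stationary-phase bound with respect to the spatial variable $z$ — i.e. making sure the constant does not blow up as the critical point migrates to the boundary of the annulus, and gluing the stationary and non-stationary regimes cleanly. The standard fix is to decompose the $z$-range dyadically relative to $|\tau|$ and apply either van der Corput / the stationary phase lemma with uniform bounds on a fixed compact set (case i) or a version adapted to phases with one-dimensional degenerate Hessian (case ii), absorbing boundary contributions by a partition of unity that separates the critical point from $\partial(\mathrm{supp}\,\chi)$. Since $\chi$ is fixed and smooth with compact support away from the origin, all derivative bounds on the phase and amplitude are uniform, and this is routine once set up; I would cite a standard reference (e.g. Stein's book, or the oscillatory integral estimates used for fractional Schrödinger and wave dispersion) rather than reproduce it.
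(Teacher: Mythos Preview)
Your proposal is correct and follows essentially the same route as the paper: write the operator as convolution with an oscillatory kernel, rescale $\xi = N\zeta$ to extract the factor $N^d$, dispose of the regime $N|\tau|\lesssim 1$ trivially, and in the remaining regime split according to whether $|z|/|\tau|$ is large (no critical point, nonstationary phase) or $\mathcal{O}(1)$ (nondegenerate Hessian when $\sigma>1$, stationary phase). The only cosmetic difference is in case~(ii): the paper passes to polar coordinates and invokes the standard decay $|\widehat{\sigma}(\xi)|\lesssim |\xi|^{-(d-1)/2}$ for the Fourier transform of the sphere measure, which is exactly the ``stationary phase in the $d-1$ angular variables'' you describe; no dyadic decomposition in $z$ is actually needed.
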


\begin{proof} The proof is standard, we recall it for the sake of completeness. Start by writing  
\[
    \exp\left(i\tau N\left(\frac{|\nabla|}{N}\right)^{\sigma}\right)\chi\left(\frac{|D_y|}{N}\right)v(y)= N^d \int_{y' \in \mathbb{R}^d} \Big(\underbrace{\int_{\eta \in \mathbb{R}^d}e^{iN((y-y')\cdot \eta + \tau|\eta|^{\sigma})}\chi(\eta)\mathrm{d}\eta}_{K_N(\tau,y,y')}\Big) v(y')\mathrm{d}y', 
\]
so that \eqref{eq.disp-fractional} and \eqref{eq.disp-half} follow from 
\begin{equation}
    \label{eq.kernel-fractional}
    \sup_{y,y'} K_N(\tau,y,y') \lesssim (1+|N\tau|)^{-\frac{d}{2}},
\end{equation}
\begin{equation}
    \label{eq.kernel-half}
    \sup_{y,y'} K_N(\tau,y,y') \lesssim (1+|N\tau|)^{-\frac{d-1}{2}}.
\end{equation}
Let us write $t=\tau N$ and remark that \eqref{eq.kernel-fractional} and \eqref{eq.kernel-half} are readily obtained in the case $0\leqslant t \leqslant C$ by compactness of the support of $\chi$. We can therefore assume $t\geqslant C>0$ in the following. Since the phase function in $K_N(\tau,y,y')$ only depends on $\tau, N$ and $Y:=N(y'-y)$ we write this phase function $\Phi(t,\eta,Y):=\frac{1}{t}Y\cdot \eta - |\eta|^{\sigma}$, so that we are left with showing that 
\[
    K(t,Y):=\int_{\eta \in \mathbb{R}^d}e^{-it\Phi(t,\eta,Y)} \chi(\eta)\mathrm{d}\eta
\]
decays as in \eqref{eq.kernel-fractional} and \eqref{eq.kernel-half}. 

We compute 
\begin{align*}
    \nabla_{\eta} \Phi(t,\eta,Y) &= \frac{1}{t}Y - \sigma|\eta|^{\sigma -2}\eta,\\
    \nabla^2_{\eta} \Phi(t,\eta,Y) &= -\sigma|\eta|^{\sigma - 2}\left(\operatorname{id} +(\sigma -2) \frac{\eta \cdot \eta^T}{|\eta|^2}\right),   \\ 
    |\det(\nabla^2_{\eta} \Phi(t,\eta,Y))| &= \sigma ^d (\sigma -1)|\eta|^{d(\sigma -2)}.
\end{align*}
Given the localization of $\eta = \mathcal{O}(1)$, the critical points solving $\nabla_{\eta}\Phi(t,\eta,Y)=0$ only exist when $\frac{|Y|}{t}=\mathcal{O}(1)$. Therefore, in the case $\frac{|Y|}{t} \geqslant C \gg 1$ there are no critical points. As in fact $|\nabla_{\eta}\Phi(t,\eta, Y)| \geqslant C$, the nonstationary phase lemma \cite[Chapter VIII, Proposition 4]{Stein} implies $|K(t,Y)| \leqslant C_kt^{-k}$ for all $k\geqslant 0$. In case $\frac{|Y|}{t}=\mathcal{O}(1)$ and if $\eta$ is a critical point, there holds $|\det(\nabla^2_{\eta} \Phi(t,\eta, Y))| \geqslant C >0$, so that the stationary phase lemma \cite[Theorem 7.7.6]{HormanderI} implies the claimed estimate. 

In the half-wave case we can also assume $|\eta| \geqslant C$, and note that critical points $\eta$ only exist if $|Y|=t$. Changing variables to polar coordinates we may write 
\[
    K(t,Y)=\int_{0}^{\infty}\int_{\omega \in \mathbb{S}^{d-1}}e^{ir(Y\cdot \omega - t)}\chi(r)r^{d-1}\mathrm{d}\sigma(\omega) \mathrm{d}r = \int_0^{\infty} e^{-irt} \hat{\sigma}(rY) \chi(r)r^{d-1} \mathrm{d}r,     
\]
where $\hat{\sigma}$ is the Fourier transform of the surface measure $\sigma$ of $\mathbb{S}^1$. In the case $t \geqslant C|Y|$, we have $|Y \cdot \omega - t| \geqslant \frac{Ct}{2}$ (at least when $C$ is large enough), so that repeated integrations by parts in $r$ yield $|K(t,Y)|\leqslant C_kt^{-k}$ for all $k\geqslant 0$. Finally, it remains to deal with the case $t \leqslant C |Y|$ and use the well-known stationary-phase type estimate $|\hat{\sigma}(\xi)| \leqslant |\xi|^{-\frac{d-1}{2}}$ \cite[Chapter VIII, eq. (15), (25)]{Stein} to bound 
\[
    |K(t,Y)| \leqslant C \int_0^{\infty}|rY|^{-\frac{d-1}{2}} \chi(r)r^{d-1}dr \lesssim  Y^{-\frac{d-1}{2}} \lesssim t^{-\frac{d-1}{2}},  
\]
which provides the claimed estimates.
\end{proof}

Equipped with such dispersion estimates, we can derive mode-wise dispersion estimates for frequency localized data. 

\begin{corollary}[Dispersion on Hermite mode $m$]\label{coro.disp-mode-m}
Let $m\geqslant 0$, $I\in 2^{\mathbb{Z}}$ and $A \in 2^{\mathbb{N}}$ such that $A \leqslant 1 + (2m +d_1)I \leqslant 2A$. Then we have for all $t \geqslant 0$, and all $(q,r) \in [2,\infty]^2$, 
\begin{equation}
    \label{eq.dispersion-half-m}
    \left\|e^{it(2m+d_1)|D_y|}\chi\left(\frac{|D_y|}{I}\right)\langle \partial_x \rangle ^{-(d_1+\varepsilon)\left(1-\frac{2}{q}\right)}u_m\right\|_{L^q_xL^{r}_y} \leqslant C\left(\frac{A^{\frac{d_2+1}{2}}}{(m+1)^{d_2}t^{\frac{d_2-1}{2}}}\right)^{\frac{1}{r'}-\frac{1}{r}}\|u_m\|_{L^{q'}_xL^{r'}_y},   
\end{equation}
and similarly if $\sigma \neq 1$, 
\begin{equation}
    \label{eq.dispersion-fract-m}
    \left\|e^{it((2m+d_1)|D_y|)^{\sigma}}\chi\left(\frac{|D_y|}{I}\right)\langle \partial_x \rangle ^{-(d_1+\varepsilon)\left(1-\frac{2}{q}\right)}u_m\right\|_{L^q_xL^{r}_y} \leqslant C\left(\frac{A^{d_2(1-\frac{\sigma}{2})}}{(m+1)^{d_2}t^{\frac{d_2}{2}}}\right)^{\frac{1}{r'}-\frac{1}{r}}\|u_m\|_{L^{q'}_xL^{r'}_y}.  
\end{equation}
In both cases $C$ depends only on $(q,r,d_1,d_2)$. 
\end{corollary}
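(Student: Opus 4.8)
The plan is to \textbf{decouple the two variables}. By definition the operator appearing in \eqref{eq.dispersion-half-m}--\eqref{eq.dispersion-fract-m} is a composition $S_y\circ R_x$, where $S_y=e^{it((2m+d_1)|D_y|)^{\sigma}}\chi(|D_y|/I)$ is a Fourier multiplier in the $y$ variable only and $R_x=\langle\partial_x\rangle^{-(d_1+\varepsilon)(1-\frac2q)}$ is a Fourier multiplier in the $x$ variable only. In particular $S_y$ and $R_x$ commute, $R_x$ acts as the identity in $y$ and $S_y$ as the identity in $x$. So I would estimate the $y$-dispersion and the $x$-smoothing separately (each uniformly in the other variable) and then chain the two bounds.

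For the $y$-part, fix $x\in\mathbb R^{d_1}$ and apply the dispersion estimates \eqref{eq.disp-half} (case $\sigma=1$, needing $d_2\geq 2$) and \eqref{eq.disp-fractional} (case $\sigma>1$) in $\mathbb R^{d_2}$ to $v=u_m(x,\cdot)$, with $d=d_2$, $N=I$, and the time parameter $\tau$ chosen so that $\tau N(|D_y|/N)^{\sigma}$ agrees with $t((2m+d_1)|D_y|)^{\sigma}$ on the frequency support $\{|\eta|\sim I\}$ of $\chi(|D_y|/I)$; that is $\tau=t(2m+d_1)$ when $\sigma=1$ and $\tau=t(2m+d_1)^{\sigma}I^{\sigma-1}$ when $\sigma>1$. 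Using $1+N|\tau|\geq N|\tau|$ together with $I\sim A/(m+1)$ and $(2m+d_1)I\sim A$ (both from $A\leq 1+(2m+d_1)I\leq 2A$), a direct computation turns $CN^{d_2}(1+N|\tau|)^{-\frac{d_2-1}{2}}$ into $CA^{\frac{d_2+1}{2}}(m+1)^{-d_2}t^{-\frac{d_2-1}{2}}$, and $CN^{d_2}(1+N|\tau|)^{-\frac{d_2}{2}}$ into $CA^{d_2(1-\frac{\sigma}{2})}(m+1)^{-d_2}t^{-\frac{d_2}{2}}$. This is the $L^1_y\to L^{\infty}_y$ endpoint; interpolating (Riesz--Thorin), uniformly in $x$, with the trivial $L^2_y\to L^2_y$ isometry of $S_y$ produces the $L^{r'}_y\to L^{r}_y$ bound with the exponent $\frac1{r'}-\frac1r$, which is exactly the $y$-content of the claimed inequalities.

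For the $x$-part, I would show $R_x\colon L^{q'}(\mathbb R^{d_1})\to L^{q}(\mathbb R^{d_1})$ is bounded for all $q\in[2,\infty]$: trivially at $q=2$; for $2<q<\infty$ by the Hardy--Littlewood--Sobolev inequality after dominating the Bessel kernel $G_{d_1(1-2/q)}$ by the Riesz kernel $|x|^{-d_1\cdot\frac2q}$ (here $\varepsilon=0$ is admissible); and at $q=\infty$ by $G_{d_1+\varepsilon}\in L^{\infty}(\mathbb R^{d_1})$, which forces $\varepsilon>0$ since $G_{d_1}$ is only logarithmically integrable at the origin. This scalar bound lifts to mixed norms by Minkowski's integral inequality: writing $(R_xh)(x,y)=\int G(x-x')h(x',y)\,\mathrm dx'$ one gets $\|R_xh\|_{L^q_xL^{r'}_y}\leq \bigl\|\,|G|*\|h(\cdot,y)\|_{L^{r'}_y}\,\bigr\|_{L^q_x}\leq C\|h\|_{L^{q'}_xL^{r'}_y}$, uniformly in the $y$-norm. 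Finally I would compose: apply the $y$-estimate (with an intermediate $L^q_x$ norm) and then the $x$-estimate, obtaining \eqref{eq.dispersion-half-m} and \eqref{eq.dispersion-fract-m} with the stated constants, the dependence on $(q,r,d_1,d_2)$ being explicit throughout.

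Every ingredient here is classical, so there is no deep obstacle; the two points needing care are the bookkeeping that converts $(N,\tau)$ into the stated powers of $A$ and $m+1$, and the endpoint $q=\infty$ in the $x$ variable. The latter is the genuine constraint: $\langle\partial_x\rangle^{-d_1}$ just fails to map $L^1_x\to L^{\infty}_x$, which is precisely what forces the $\varepsilon$-loss when $q=\infty$, and this loss then propagates through the $TT^{*}$ argument and, as noted in the introduction, is ultimately responsible for the $\varepsilon$ appearing in \cref{thm.main-Strichartz}.
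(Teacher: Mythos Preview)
Your proposal is correct and follows essentially the same route as the paper: decouple the variables, apply the dispersion lemma in $y$ with the same rescaling $(N,\tau)$, interpolate to reach $L^{r'}_y\to L^r_y$, and then handle the $x$-smoothing separately. The only difference is cosmetic: the paper proves the $x$-bound at the endpoints $q=2$ and $q=\infty$ (the latter via Minkowski and the Sobolev embedding $\|\langle\partial_x\rangle^{-(d_1+\varepsilon)}f\|_{L^\infty_x}\lesssim\|f\|_{L^1_x}$, which is exactly your kernel boundedness) and then invokes Stein's complex interpolation to cover intermediate $q$, while you treat $2<q<\infty$ directly via Hardy--Littlewood--Sobolev on the Bessel kernel---both are equally valid and give the same result.
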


\begin{proof} As both claims are similar, we only explain how to obtain the half-wave case. By applying \eqref{eq.disp-half} to $v=v_m:=\langle \partial_x \rangle ^{-(d_1+\varepsilon)\left(1-\frac{2}{q}\right)}u_m(x,\cdot)$ with $N=I$, $d=d_2$ and $\tau = (2m+d_1)t$, we get 
\begin{multline*}
    \left\|e^{it(2m+d_1)|D_y|}\chi\left(\frac{|D_y|}{I}\right)v_m\right\|_{L^{\infty}_y} \leqslant \frac{C A^{\frac{d_2+1}{2}}}{(2m+d_1)^{d_2}(1+ t)^{\frac{d_2-1}{2}}}\|v_m(x,\cdot)\|_{L^{1}_y} \\
    \leqslant \frac{CA^{\frac{d_2+1}{2}}}{(m+1)^{d_2}t^{\frac{d_2-1}{2}}}\|v_m(x,\cdot)\|_{L^{1}_y},  
\end{multline*}
In view of
\[
    \left\|e^{it(2m+d_1)|D_y|}\chi\left(\frac{|D_y|}{I}\right)v_m\right\|_{L^{2}_y} \leqslant \|v_m\|_{L^{2}_y}, 
\]
we can use complex interpolation to obtain that for all $r \in [2,\infty]$ there holds 
\begin{multline}
    \label{eq.norm-y}
    \left\|e^{it(2m+d_1)|D_y|}\chi\left(\frac{|D_y|}{I}\right)\langle \partial_x \rangle ^{-(d_1+\varepsilon)(1-2/q)}u_m\right\|_{L^{r}_y} \leqslant C\left(\frac{A^{\frac{d_2+1}{2}}}{(m+1)^{d_2}t^{\frac{d_2-1}{2}}}\right)^{\frac{1}{r'}-\frac{1}{r}} \\
    \times \|\langle \partial_x \rangle ^{-(d_1+\varepsilon)(1-2/q)}u_m\|_{L^{r'}_y}.  
\end{multline}
Taking the $L^2_x$ norms in \eqref{eq.norm-y}, one obtains \eqref{eq.dispersion-half-m} in the case $(q,r)=(2,r)$.  

If $q=\infty$, we also take the $L^{\infty}_x$ norm in \eqref{eq.norm-y}:
\begin{multline*}
    \left\|e^{it(2m+d_1)|D_y|}\chi\left(\frac{|D_y|}{I}\right)\langle \partial_x \rangle ^{-(d_1+\varepsilon)}u_m\right\|_{L^{\infty}_xL^{r}_y} \leqslant C\left(\frac{A^{\frac{d_2+1}{2}}}{(m+1)^{d_2}t^{\frac{d_2-1}{2}}}\right)^{\frac{1}{r'}-\frac{1}{r}} \\ 
    \times \|\langle \partial_x \rangle ^{-(d_1+\varepsilon)}u_m\|_{L^{\infty}_xL^{r'}_y}.
\end{multline*} 
Using the triangle inequality, the Sobolev embedding and the Minkowski inequality we bound
\[
    \|\langle \partial_x \rangle ^{-(d_1+\varepsilon)}u_m\|_{L^{\infty}_xL^{r'}_y} \leqslant \|\langle \partial_x \rangle ^{-(d_1+\varepsilon)}u_m\|_{L^{r'}_yL^{\infty}_x} \leqslant C \|u_m\|_{L^{r'}_yL^{1}_x} \leqslant  C \|u_m\|_{L^{1}_xL^{r'}_y}.
\]
This yields
\[
    \left\|e^{it(2m+d_1)|D_y|}\chi\left(\frac{|D_y|}{I}\right)\langle \partial_x \rangle ^{-(d_1+\varepsilon)}u_m\right\|_{L^{\infty}_xL^{r}_y} \leqslant C\left(\frac{A^{\frac{d_2+1}{2}}}{(m+1)^{d_2}t^{\frac{d_2-1}{2}}}\right)^{\frac{1}{r'}-\frac{1}{r}}\|u_m\|_{L^{1}_xL^{r'}_y}.      
\]
We then obtain \eqref{eq.dispersion-half-m} using Stein's complex interpolation theorem \cite{S56} between $(q,r)=(2,r)$ and $(q,r)=(\infty,r)$.
\end{proof}

\begin{proof}[Proof of \cref{prop.strichartz-modewise}]
We only detail the half-wave case $\sigma = 1$, as the fractional case $\sigma \neq 1$ is similar. In order to prove \eqref{eq.grushin-strichartz-modewise} we use the $TT^*$ argument.  Let $T_m$ be the operator defined for all $v \in L^2_{x,y}$ by 
\[ 
    T_mv(t)=e^{it(2m+d_1)|D_y|}\chi\left(\frac{|D_y|}{I}\right)\langle \partial_x\rangle^{-(d_1+\varepsilon)\left(\frac{1}{2}-\frac{1}{q}\right)}v.
\] 
Now \eqref{eq.grushin-strichartz-modewise} amounts to proving the continuity bound  
\[ 
    \|T_m\|_{L^2_{x,y} \to L^p_TL^q_xL^r_y} \leqslant \frac{CA^{\frac{d_2+1}{2}(\frac{1}{2}-\frac{1}{r})}}{(m+1)^{d_2(\frac{1}{2}-\frac{1}{r})}} 
\]
which is equivalent to showing continuity
\[
    \|T_mT_m^*\|_{L^{p'}_TL^{q'}_{x}L^{r'}_y \to L^p_TL^q_xL^r_y} \leqslant \frac{CA^{(d_2+1)(\frac{1}{2}-\frac{1}{r})}}{(m+1)^{2d_2(\frac{1}{2}-\frac{1}{r})}} 
\]
One can compute 
\begin{equation*}
    \label{eq.TTm}
    T_mT_m^*v(t)=\int_{\mathbb{R}} e^{i(t-t')(2m+d_1)|D_y|}\chi\left(\frac{(2m+d_1)|D_y|}{A}\right)\langle \partial_x\rangle^{-(d_1+\varepsilon)(1-\frac{2}{q})}v(t')\mathrm{d}t', 
\end{equation*}
where we have used that $\langle \partial_x \rangle$ commutes with $e^{it(2m+d_1)|D_y|}\chi\left(\frac{|D_y|}{I}\right)$. 

Using the triangle inequality and the dispersive estimate \eqref{eq.dispersion-half-m} yields 
\[
    \|T_mT_m^*v(t)\|_{L^q_xL^r_y} \lesssim \int_{\mathbb{R}} \left(\frac{A^{\frac{d_2+1}{2}}}{(m+1)^{d_2}|t-t'|^{\frac{d_2-1}{2}}}\right)^{\frac{1}{r'}-\frac{1}{r}} \|v(t')\|_{L^{q'}_xL^{r'}_y}\mathrm{d}t',  
\]
Next, we apply the Hardy-Littlewood-Sobolev inequality, which requires that $\frac{d_2-1}{2}(\frac{1}{r'}-\frac{1}{r})<1$ (in case $\sigma >1$ it would be $\frac{d_2}{2}(\frac{1}{r'}-\frac{1}{r})<1$). Therefore, we arrive at  
\[
    \|T_mT_m^*v(t)\|_{L^p_TL^q_xL^r_y} \lesssim \frac{A^{\frac{d_2+1}{2}(1-\frac{2}{r})}}{(m+1)^{d_2(1-\frac{2}{r})}} \|v\|_{L^{p'}_TL^{q'}_xL^{r'}_y},
\] 
provided $(p,r)$ are such that 
\[
    1 + \frac{1}{p} = \frac{1}{p'} + \frac{d_2-1}{2}\left(1-\frac{2}{r}\right),    
\]
which rewrites as $\frac{2}{p} = (d_2-1)\left(\frac{1}{2} - \frac{1}{r}\right)$, which is exactly the scaling condition \eqref{eq.adm-S} and ends the proof. 
\end{proof}

\subsection{Estimates on $\mathbb{R}^{d_1}\times M$} 

In this section $(M,g)$ is a smooth compact manifold without boundary. 
Our goal is to obtain the following mode-wise frequency localized Strichartz estimate, which is essentially a consequence of \cite{dinh}, itself an adaptation of the analysis of \cite{BGT} to the case of fractional Schrödinger operators. 

\begin{proposition}[Mode Strichartz estimate: compact manifold case]\label{prop.strichartz-modewise-torus} Let $\sigma \in [1,\infty)$.
    Let $m\geqslant 0$ and $I \in 2^{\mathbb{Z}}$ such that $A \leqslant 1+(2m+d_1)I \leqslant 2A$. 
    Let $(p,q,r)$ be admissible (or $\sigma$-admissible). Then for all smooth functions $u_m$ defined on $\mathbb{R}^{d_1}\times M$, there holds  
    \begin{equation}
        \label{eq.grushin-strichartz-modewise-torus}
        \left\|e^{it((2m+d_1)|D_y|)^{\sigma}}\chi\left(\frac{|D_y|}{I}\right)u_m\right\|_{L^p_TL^q_xL^r_y} \leqslant C'(A,m)\left\|\langle\p_x \rangle ^{(d_1+\varepsilon)(\frac{1}{2}-\frac{1}{q})}u_m\right\|_{L^{2}_{x,y}},   
    \end{equation}
    where $T = \frac{C}{(m+1)A^{\sigma -1}}$, and where $C'(A,m)$ is defined by \eqref{eq.def-CIm}.
\end{proposition}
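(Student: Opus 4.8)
The plan is to reduce \eqref{eq.grushin-strichartz-modewise-torus} to the local-in-time Strichartz estimates for the frequency-localized fractional Schrödinger (and half-wave) propagator on the compact manifold $M$ provided by \cite{dinh} (adapting \cite{BGT}), via a separation of the $x$-variable followed by a time dilation. Since $e^{it((2m+d_1)|D_y|)^\sigma}$ and $\chi(|D_y|/I)$ act only in $y$ and commute with $\langle\p_x\rangle$, I would first apply, for a.e.\ $(t,y)$, the Sobolev embedding $H^{(d_1+\varepsilon)(1/2-1/q)}(\mathbb{R}^{d_1}_x)\hookrightarrow L^q_x$ (with $\varepsilon=0$ when $q<\infty$, using $q\geqslant2$) and then Minkowski's inequality twice (legitimate since $p,r\geqslant2$). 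Writing $w:=\langle\p_x\rangle^{(d_1+\varepsilon)(1/2-1/q)}u_m$ and using that $(-\Delta_G)^\sigma$ acts on the Hermite mode $m$ as $(2m+d_1)^\sigma(-\Delta_g)^{\sigma/2}$ in $y$ (with $|D_y|=\sqrt{-\Delta_g}$), this reduces the claim to the scalar-in-$y$ bound
\[
  \Bigl\|e^{it(2m+d_1)^\sigma(-\Delta_g)^{\sigma/2}}\chi\bigl(\tfrac{\sqrt{-\Delta_g}}{I}\bigr)f\Bigr\|_{L^p([0,T])L^r_y(M)}\leqslant C'(A,m)\,\|f\|_{L^2_y(M)}\qquad(f\in L^2(M)),
\]
applied with $f=w(x,\cdot)$ and integrated over $x$. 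One may assume $I\gtrsim1$ throughout, since otherwise $\chi(\sqrt{-\Delta_g}/I)=0$ (the spectrum of $\sqrt{-\Delta_g}$ is discrete with a gap above $0$).

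Next I would perform the time dilation $s=(2m+d_1)^\sigma t$, which turns $e^{it(2m+d_1)^\sigma(-\Delta_g)^{\sigma/2}}$ into $e^{is(-\Delta_g)^{\sigma/2}}$ and maps $[0,T]$ onto $[0,\tau]$ with $\tau=(2m+d_1)^\sigma T\sim\bigl(\tfrac{m+1}{A}\bigr)^{\sigma-1}\sim I^{-(\sigma-1)}$ (using $A\sim(m+1)I$ and $T=c/((m+1)A^{\sigma-1})$). The dilation costs a factor $(2m+d_1)^{-\sigma/p}$ in the $L^p_t$ norm, and a short computation with $A\sim(m+1)I$, together with the (resp.\ $\sigma$-)admissibility relation — which, after expanding $\gamma_{q,r}$, is equivalent to $\tfrac1p=\tfrac{d_2}{2}\bigl(\tfrac12-\tfrac1r\bigr)$ when $\sigma>1$ and to $\tfrac2p=(d_2-1)\bigl(\tfrac12-\tfrac1r\bigr)$ when $\sigma=1$ — shows that $(2m+d_1)^{\sigma/p}C'(A,m)\sim I^{\theta}$ with $\theta=d_2\bigl(\tfrac12-\tfrac1r\bigr)-\tfrac\sigma p$; explicitly $\theta=\tfrac{d_2(2-\sigma)}{2}\bigl(\tfrac12-\tfrac1r\bigr)$ for $\sigma>1$ and $\theta=\tfrac{d_2+1}{2}\bigl(\tfrac12-\tfrac1r\bigr)$ for $\sigma=1$. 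Hence the estimate from the first step is equivalent to
\[
  \Bigl\|e^{is(-\Delta_g)^{\sigma/2}}\chi\bigl(\tfrac{\sqrt{-\Delta_g}}{I}\bigr)f\Bigr\|_{L^p([0,\,cI^{-(\sigma-1)}])L^r_y(M)}\lesssim I^{\theta}\,\|f\|_{L^2_y(M)},
\]
i.e.\ the frequency-$I$-localized Strichartz estimate for $e^{is(-\Delta_g)^{\sigma/2}}$ on $M$ with the sharp flat-space derivative loss $I^\theta$, on the semiclassical time interval of length $\sim I^{-(\sigma-1)}$.

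For this last inequality I would invoke \cite{dinh} when $\sigma>1$ and \cite{BGT} when $\sigma=1$ (and for the underlying parametrix machinery in both cases): at frequency $h^{-1}=I$, the WKB parametrix for $e^{is(-\Delta_g)^{\sigma/2}}\chi(\sqrt{-\Delta_g}/I)$ is valid for $|s|\lesssim h^{\sigma-1}$ — the time needed by the bicharacteristic flow, which moves at group velocity $\sim I^{\sigma-1}$, to traverse a fixed distance in $M$ — and on that interval the propagator obeys the same dispersive bound as on $\mathbb{R}^{d_2}$, namely $L^1_y\to L^\infty_y$ with constant $\lesssim I^{d_2(1-\sigma/2)}|s|^{-d_2/2}$ when $\sigma>1$ (nondegenerate Hessian of $|\xi|^\sigma$ for $|\xi|\sim I$) and $\lesssim I^{(d_2+1)/2}|s|^{-(d_2-1)/2}$ when $\sigma=1$ (stationary phase on the light cone, which is where $d_2\geqslant2$ is used). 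A $TT^*$ argument followed by the Hardy--Littlewood--Sobolev inequality then gives the $L^p_tL^r_y$ bound for admissible (resp.\ $\sigma$-admissible) $(p,r)$, the HLS integrability condition being exactly the upper restriction $\tfrac12-\tfrac1r<\tfrac1{d_2-1}$ in \eqref{eq.adm-S3} (resp.\ $\tfrac12-\tfrac1r<\tfrac1{d_2}$ in \eqref{eq.adm-fS3}).

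The $x$-separation and the dilation bookkeeping are routine, so the main obstacle is to extract from \cite{dinh,BGT} the semiclassical Strichartz estimate in exactly the form needed — with the time scale $|s|\sim I^{-(\sigma-1)}$, the right normalization of the frequency projector, and the loss exponent $I^\theta$ — and to check that, after the dilation, it reproduces precisely the constant $C'(A,m)$ of \eqref{eq.def-CIm}; this is where the admissibility identities for $\tfrac1p$ are used, and where the upper bounds in \eqref{eq.adm-S3}/\eqref{eq.adm-fS3} are needed to close the Hardy--Littlewood--Sobolev step. One also has to record that only $I\gtrsim1$ actually occurs (the frequency projector vanishes otherwise), so that the genuinely semiclassical parametrix of \cite{BGT,dinh} is applicable.
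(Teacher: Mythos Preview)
Your approach is correct and reaches the result, but the organization differs from the paper's. The paper does not separate off the $x$-variable first: it quotes from \cite{dinh} only the $L^1_y\to L^\infty_y$ dispersive bound on the semiclassical time interval (and, for $\sigma=1$, transfers the Euclidean dispersion to $M$ by exact finite speed of propagation rather than by citing a \cite{BGT}-type parametrix), then reruns verbatim the $(x,y)$-interpolation of \cref{coro.disp-mode-m} and the $TT^*$ argument of \cref{prop.strichartz-modewise}. You instead peel off $x$ by Sobolev/Minkowski, time-dilate, and invoke the finished semiclassical Strichartz estimate on $M$ as a black box; your bookkeeping showing that the dilation reproduces exactly $C'(A,m)$ via the admissibility identity $\tfrac{2}{p}=(d_2-1)(\tfrac12-\tfrac1r)$ (resp.\ $\tfrac{2}{p}=d_2(\tfrac12-\tfrac1r)$) is clean and correct. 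Your route is more modular; the paper's route stays parallel to the Euclidean proof and avoids any vector-valued embedding. One small imprecision in your write-up: applying Sobolev ``for a.e.\ $(t,y)$'' literally controls $L^p_TL^r_yL^q_x$, not $L^p_TL^q_xL^r_y$, and Minkowski does not repair this unless $q\leqslant r$. What you actually need (and what is standard) is the $L^r_y$-valued Sobolev embedding $H^{(d_1+\varepsilon)(\frac12-\frac1q)}(\mathbb{R}^{d_1}_x;L^r_y)\hookrightarrow L^q(\mathbb{R}^{d_1}_x;L^r_y)$, which follows from the scalar case since the Bessel/Riesz kernel is positive; after that your two Minkowski swaps (using $p,r\geqslant 2$) go through as written. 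Also note that \cite{BGT} treats the Schr\"odinger propagator; for $\sigma=1$ the half-wave Strichartz on $M$ is obtained (as the paper does) by finite speed of propagation, which you allude to but do not make explicit.
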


\begin{remark} Estimates of the form \eqref{eq.grushin-strichartz-modewise-torus} can be inferred from the Strichartz estimates that are best suited for the problem. In this article we work on a general compact manifold without boundary, and we have used the set of estimates from~\cite{dinh}.  However, in some particular cases, better estimates might be available and this would improve in turn our results.  Let us illustrate how the situation may differ on the specific case of the torus $M=\mathbb{T}^2$. Let us also choose for simplicity $(d_1,d_2,\sigma)=(1,2,2)$.     
On the one hand, gluing \eqref{eq.grushin-strichartz-modewise-torus} on $\sim (m+1)A$ intervals gives 
\begin{multline}
    \label{eq.general-manifold}
    \left\|e^{it(2m+2)^2\Delta_y}\chi\left(\frac{|D_y|}{I}\right)u_m\right\|_{L_{t,x,y}^{4}([0,1] \times \mathbb{R}\times \mathbb{T}^2)} \lesssim \frac{A^{\frac{1}{4}}}{(m+1)^{\frac{1}{2}}}\left\|\langle\p_x \rangle ^{\frac{1+\varepsilon}{4}}u_m\right\|_{L^{2}_{x,y}}\\
     \lesssim \frac{A^{\frac{3}{8} + \varepsilon}}{(m+1)^{\frac{1}{4}}}\|u_m\|_{L^2_{x,y}}.   
\end{multline}
On the other hand, using the Strichartz estimate $L^2_y(\mathbb{T}^2) \to L^4_{t,y}([0,1]\times \mathbb{T}^2)$ \cite{zygmund} and rescaling $t$ to $(2m+2)^2t$ as well as the Sobolev embedding $H^{\frac{1}{4}}_x \hookrightarrow L^4_x$ yields 
\[
    \left\|e^{it(2m+2)^2\Delta_y}\chi\left(\frac{|D_y|}{I}\right)u_m\right\|_{L_{t,x,y}^{4}([0,1] \times \mathbb{R}\times \mathbb{T}^2)}\lesssim A^{\frac{1}{8}}\|u_m\|_{L^2_{x,y}}
\] 
which avoids an $A^{\frac{1}{4}}$ loss compared to \eqref{eq.general-manifold}. 

This is not surprising as \eqref{eq.grushin-strichartz-modewise-torus} ultimately relies on Strichartz estimates on general compact domains without boundaries, for which in general Strichartz estimates are known to be worse than for the torus. 
\end{remark}

\begin{proof} In the case $\sigma \neq 1$ we can use \cite[Proposition 3.5, (3.8)]{dinh}. With $(h^{-1},t,\varphi,u_0)=(N,\tau,\chi,\tilde{\chi}v)$ where $\tilde{\chi}$ satisfies $\chi\tilde{\chi}=\tilde{\chi}$, this writes: 
\begin{equation}
    \label{eq.disp-compactM}
    \left\|\exp\left(i\tau N\left(\frac{|D_y|}{N}\right)^{\sigma}\right)\chi\left(\frac{|D_y|}{N}\right)v\right\|_{L^{\infty}_y} \leqslant \frac{CN^{d_2}}{(1+N |\tau|)^{\frac{d_2}{2}}}\left\|\tilde{\chi}\left(\frac{|D_y|}{N}\right)v\right\|_{L^1_y},   
\end{equation} 
and holds for $|\tau| \leqslant c$ for some $c>0$. The proof of \eqref{eq.grushin-strichartz-modewise-torus} can now be carried out by reproducing that of \cref{coro.disp-mode-m}.

The case $\sigma=1$ is even simpler by using exact finite speed of propagation. To that end, let us fix $u_0 \in L^1_y(M)$ and let us also choose coordinate charts $\{U_j,V_j,\theta_j\}_{1 \leqslant j \leqslant J}$ of $M$, where $\theta_i : U_i \subset M \to V_i \subset \mathbb{R}^d$ are diffeomorphisms. Let us also fix $1=\sum_{1 \leqslant j\leqslant J} \psi_j$, a partition of unity of $M$ adapted to such coordinate charts. Because the partition of unity is finite, we can restrict to initial data of the form $v_0=\tilde{\psi}\chi\left(\frac{|D_y|}{I}\right)u_0$, where $\tilde{\psi}$ is such that $\operatorname{supp}(\tilde{\psi})$ is compactly contained in $\{\psi = 1\}$ for some $\psi = \psi_j$. Omitting the subscript $j$, we let $(U,V,\theta)$ be the corresponding patch. In this patch, the operator $|D_y|=\sqrt{-\Delta_y}$ reads $\sqrt{-\Delta_g}$ where $-\Delta_g$ is the Laplace--Beltrami operator associated to $(M,g)$. 

Consider $v_F(\tau)=S(\tau)(\theta_* v_0)$ where $S(\tau)=\exp\left(i \tau \sqrt{-\Delta_g}\right)$ is the free evolution on $\mathbb{R}^{d_2}$ and where $\theta_*v_0 := v_0 \circ \theta^{-1}$. Because $v_F$ is a solution to the half-wave equation, hence of the wave equation on $\mathbb{R}^d$, it enjoys finite speed propagation at speed $1$. It follows that there exists $c>0$ such that $\operatorname{supp}(\theta^*S(\tau)\theta_* v_0) \subset \operatorname{supp}(\psi)$ for all $\tau \in [0,c]$. 
    
It follows that the solution $v(t)=\exp\left(iN\tau \left(\frac{|D_y|}{N}\right)^{\sigma}\right)v_0$ coincides with $\theta^*S(\tau)\theta_* v_0$ for all $\tau \in [0,c]$, from which we can conclude that \eqref{eq.disp-half} is also valid in the compact manifold case, on a $\mathcal{O}(1)$ timescale.
\end{proof}

\section{Applications to the Cauchy theory of nonlinear dispersive equations}\label{sec.cauchy}

In order to prove \cref{thm.main-Cauchy}, let us fix $u_0 \in H^s_G$ and observe that $u$ solves \eqref{eq.NLS-G} (or \eqref{eq.fNLS-G}) in $X^s_T$ if and only if it is a fixed point of the functional $\Phi$ defined by 
\begin{equation}
    \label{eq.integral-NLSG}
    \Phi (u) (t)=e^{it(-\Delta_G)^{\sigma}}u_0 - i \int_0^t e^{i(t-t')(-\Delta_G)^{\sigma}}F(u(t'))\,\mathrm{d}t', 
\end{equation}
where $F(u)=|u|^{\kappa -1}u$.

The proof of \cref{thm.main-Cauchy} is now a consequence of the Banach fixed point theorem once we prove the following estimates. 

\begin{lemma}\label{lem.contraction-NLSG}
If $(\kappa,\sigma) = (5,1)$, assume $s>2$. If $\kappa = 3$ and $\sigma \in (1,\frac{5}{2})$ assume $s>\frac{5}{2}-\sigma$ in the Euclidean case and $s>2$ in the compact manifold case.

Then, there exists $\theta >0$ such that for any $u, v \in X^s_T$ there holds 
\begin{equation}
    \label{eq.stable-Xs}
    \|\Phi(u)\|_{X^s_T} \lesssim \|u_0\|_{H^s_G} + T^{\theta}\|u\|^{\kappa}_{X^s_T}, 
\end{equation}
as well as the contraction estimate: 
\begin{equation}
    \label{eq.contraction-Xs}
    \|\Phi(u)-\Phi(v)\|_{X^s_T} \lesssim T^{\theta}(\|u\|_{X^s_T}^{\kappa - 1} + \|v\|_{X^s_T}^{\kappa - 1})\|u-v\|_{X^s_T}.
\end{equation}
\end{lemma}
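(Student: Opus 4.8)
The plan is to run the standard Banach fixed-point machinery in the space $X^s_T \cap \mathcal{C}^0([0,T],H^s_G)$, feeding the nonlinearity through the inhomogeneous Strichartz estimate \eqref{eq.grushin-strichartz-inhomogeneous} of \cref{thm.main-Strichartz}. First I would record that, by definition of $X^s_T$ and by \eqref{eq.grushin-strichartz}, the linear term $e^{it(-\Delta_G)^\sigma}u_0$ is bounded in $X^s_T$ by $\|u_0\|_{H^s_G}$, so the whole game reduces to estimating the Duhamel term $\int_0^t e^{i(t-t')(-\Delta_G)^\sigma}F(u(t'))\d t'$ in $X^s_T$ in terms of a suitable space-time norm of $F(u)$, and then estimating that norm of $F(u)=|u|^{\kappa-1}u$ in terms of $\|u\|_{X^s_T}$. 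Applying \eqref{eq.grushin-strichartz-inhomogeneous}, it suffices to bound $\|(\operatorname{id}-\Delta_G)^{\frac{s-\gamma_{b,c}}{2}}F(u)\|_{L^{a'}_TL^{b'}_xL^{c'}_y}$ — for the dual admissible exponents $(a,b,c)$ dictated by the chosen output triple — by $\|u\|_{X^s_T}^\kappa$, up to a positive power $T^\theta$ coming from Hölder in time on the finite interval $[0,T]$.

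The core of the argument is therefore a nonlinear estimate of the schematic form $\| \langle \nabla_G\rangle^{s'} (|u|^{\kappa-1}u)\| \lesssim \|u\|_{X^s_T}^\kappa$. I would handle the fractional derivative $\langle\nabla_G\rangle^{s'}$ via the fractional Leibniz (Kato–Ponce) rule adapted to the Baouendi–Grushin calculus — distributing it onto one factor — together with the algebra/product structure; since $\kappa=5$ (resp. $\kappa=3$) is an odd integer one can alternatively iterate an ordinary Leibniz rule and avoid any paralinearization subtlety, which is presumably why these values were chosen. The derivatives are then controlled by choosing, among the $\sigma$-admissible triples, one pair $(q,r)$ well-suited to Sobolev-embed $H^s_G$ (using \cref{thm.sobolev}, \cref{lem.x-control} and \eqref{eq.sobolev-Wkp}) into the relevant $L^q_xL^r_y$-type space with room to spare, so that the product of $\kappa$ factors lands in the dual Strichartz space $L^{a'}_TL^{b'}_xL^{c'}_y$ with the correct total number of derivatives $s-\gamma_{b,c}$; the $\varepsilon$-losses in \cref{thm.main-Strichartz} are absorbed by the strict inequalities $s>2$ (resp. $s>\frac52-\sigma$). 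Counting the scaling/derivative budget is exactly where the thresholds $s>2$ when $(\kappa,\sigma)=(5,1)$ and $s>\frac52-\sigma$ when $\kappa=3$ come from, and the gain of $T^\theta$ comes from the fact that the time-Lebesgue exponent on the nonlinear side is strictly larger than that forced by scaling, leaving a little Hölder room on the bounded interval. The contraction estimate \eqref{eq.contraction-Xs} follows from the very same computation applied to the pointwise bound $|F(u)-F(v)|\lesssim (|u|^{\kappa-1}+|v|^{\kappa-1})|u-v|$ together with the analogous difference bound for derivatives, which is routine once the $\|\Phi(u)\|_{X^s_T}$ estimate is in place.

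The persistence in $\mathcal{C}^0([0,T],H^s_G)$ is obtained in parallel: apply $\langle\nabla_G\rangle^s$ to \eqref{eq.integral-NLSG}, bound the linear part trivially in $L^\infty_T L^2_{x,y}$, and bound the Duhamel part by Minkowski in time and the same nonlinear estimate, so that both norms close simultaneously on a small ball of radius $\sim\|u_0\|_{H^s_G}$ for $T=T(\|u_0\|_{H^s_G})$ small. Finally, for the global-in-time claim when $s=\sigma\in(\frac54,2)$ (the defocusing mass/energy-type regime for $\kappa=3$), I would invoke conservation of the $L^2$-mass and of the Hamiltonian $\frac12\|(-\Delta_G)^{\sigma/2}u\|_{L^2}^2 + \frac{1}{\kappa+1}\|u\|_{L^{\kappa+1}}^{\kappa+1}$, which controls $\|u(t)\|_{H^\sigma_G}$ uniformly in $t$ (using $H^\sigma_G\hookrightarrow L^{\kappa+1}$ for these parameters), and then iterate the local theory with a time step bounded below. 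The main obstacle I anticipate is the derivative-counting bookkeeping: making sure that for the prescribed $(s,\sigma,\kappa)$ there genuinely exists an admissible output triple and a dual admissible triple for which the fractional Leibniz distribution, the $x$- and $y$-Sobolev embeddings, and the $\varepsilon$-loss all fit inside the available regularity $s$ with a strictly positive leftover power of $T$ — i.e. verifying that the inequalities are not merely borderline but have the needed slack.
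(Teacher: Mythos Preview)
Your proposal is correct and follows essentially the same approach as the paper: apply the homogeneous and inhomogeneous Strichartz estimates of \cref{thm.main-Strichartz}, distribute the $H^s_G$-derivatives on the nonlinearity via a Leibniz/product rule, and close with Sobolev embedding plus H\"older in time to extract the $T^\theta$ gain. The paper's implementation commits to the specific dual triple $(a,b,c)=(\infty,2,2)$ (so $\gamma_{b,c}=0$ and the nonlinear term is estimated in $L^1_TH^s_G$), cites \cite[Proposition~2.10]{LG22} for the product estimate $\|\langle\nabla_G\rangle^s(|u|^{\kappa-1}u)\|_{L^2_{x,y}}\lesssim\|u\|_{H^s_G}\|u\|_{L^\infty_{x,y}}^{\kappa-1}$, and then controls $\|u\|_{L^{\kappa-1}_TL^\infty_{x,y}}$ by the $X^s_T$-norm via \eqref{eq.sobolev-Wkp} with an explicit admissible triple $(p_\varepsilon,r_\varepsilon,r_\varepsilon)$; your plan leaves these choices open but would arrive at the same endpoint.
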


\begin{proof} 
Let us only prove \eqref{eq.stable-Xs}, as \eqref{eq.contraction-Xs} follows by similar methods. In the proof we will choose the triple $(p,q,r)$ appearing in the definition of the $X^s_T$ norm, for now we only require that $(p,q,r)$ is admissible (resp. $\sigma$-admissible).

First, using the Strichartz estimates \eqref{eq.grushin-strichartz} and \eqref{eq.grushin-strichartz-inhomogeneous}, choosing $(a,b,c)=(\infty, 2,2)$ (which is also an allowed triple, in this case the Strichartz estimate is straightforward) therefore $\gamma(a,b,c)=0$ and $(a',b',c')=(1,2,2)$ so that
\[
    \|\Phi(u)\|_{X^s(I)} \leqslant \|u_0\|_{H^s} + \|\langle \nabla \rangle ^s F(u)\|_{L^1_TL^{2}_{x,y}}, 
\]
where $(1,2,2)$ is the dual triple of the admissible triple $(\infty, 2,2)$. Then we use a nonlinear estimate adapted to the Baouendi--Grushin operator, see for instance \cite[Proposition 2.10]{LG22}, which yields 
\[
    \|\langle \nabla \rangle ^s F(u)\|_{L^1_TL^{2}_{x,y}} \lesssim \|u\|_{L^{\infty}_TH^s_G}\|u\|_{L^{\kappa -1}_TL^{\infty}_{x,y}}^{\kappa -1}. 
\]
It remains to prove that 
\begin{equation}
    \label{eq.bound-apriori}
    \|u\|_{L^{\kappa -1}_TL^{\infty}_{x,y}}^{\kappa -1} \lesssim T^{\theta}\|u\|_{X^s}^{\kappa -1}.  
\end{equation}
\begin{itemize}
    \item Assume $(\kappa, \sigma) = (5,1)$, $\sigma = 1$ and $s>2$. Let $\varepsilon \ll 1$. Let $(p_{\varepsilon},r_{\varepsilon})$ chosen such that $\frac{1}{p_{\varepsilon}} = \frac{1}{4} - \frac{\varepsilon}{2}$ and $\frac{1}{r_{\varepsilon}} = \varepsilon$. One can compute $\gamma_{\varepsilon}:=\gamma_{r_{\varepsilon},r_{\varepsilon}}=2-4\varepsilon$ and that $(p_{\varepsilon},r_{\varepsilon},r_{\varepsilon})$ is admissible. An application of the Sobolev embedding \eqref{eq.sobolev-Wkp} and the Hölder inequality in $t$ yields 
    \[
        \|u\|_{L^{4}_TL^{\infty}_{x,y}} \lesssim \|\langle \Delta _G \rangle ^{\frac{5\varepsilon}{2}} u\|_{L^4_TL_{x,y}^{r_{\varepsilon}}} \lesssim T^{\theta(\varepsilon)} \|\langle \Delta _G \rangle ^{\frac{5\varepsilon}{2}} u\|_{L^{p_\varepsilon}_TL_{x,y}^{r_{\varepsilon}}}, 
    \] 
    with $\theta_{\varepsilon} = \frac{\varepsilon}{2}>0$. In order to conclude, observe that $\gamma_{\varepsilon} = 2 - 4\varepsilon < s - 5\varepsilon$ for sufficiently small $\varepsilon$, and \eqref{eq.bound-apriori} follows. The compact case works the same, as $(p_\varepsilon,r_{\varepsilon})$ satisfies \eqref{eq.adm-S3}.
    \item Assume $\kappa = 3$, $\sigma \in (1,2)$ and $s>\frac{5}{2}-\sigma$. A similar application of the Sobolev embedding \eqref{eq.sobolev-Wkp} and the Hölder inequality in $t$, with $\frac{1}{p_{\varepsilon}}=\frac{1}{2}-\varepsilon$ and $\frac{1}{r_{\varepsilon}}=\varepsilon$ yields  
    \[
        \|u\|_{L^{2}_TL^{\infty}_{x,y}} \lesssim T^{\theta(\varepsilon)} \|\langle \Delta _G \rangle ^{\frac{5\varepsilon}{2}} u\|_{L^{p_\varepsilon}_TL_{x,y}^{r_{\varepsilon}}},
    \]
    where $\theta(\varepsilon)=\varepsilon >0$. The triple $(p_{\varepsilon},r_{\varepsilon},r_{\varepsilon})$ is $\sigma$-admissible and $\gamma_{\varepsilon}:=\gamma_{r_{\varepsilon},r_{\varepsilon}} = (5-2\sigma)(\frac{1}{2}-\varepsilon)$. The conclusion follows for $\varepsilon$ small enough, since $s>5\varepsilon + \gamma_{\varepsilon}$. \qedhere
\end{itemize}
\end{proof}

\begin{remark} Note that in the compact case, $\kappa = 3$ and $\sigma \neq 1$, we would not be able to make the above method successful: the condition $\frac{1}{2}-\frac{1}{r_\varepsilon} > \frac{1}{4} + \frac{1}{4p_{\varepsilon}}$ from \eqref{eq.adm-fS3} would never be satisfied. 
\end{remark}

The global well-posedness statement follows from the local theory and conservation of the energy the $L^2$ norm of $u$ and the energy $E(u(t))=\frac{1}{2}\|(-\Delta_G)^{\frac{\sigma}{2}}u(t)\|^2_{L^2} + \frac{1}{4}\|u\|_{L^4}^4$, which controls $\|u(t)\|^2_{H_G^{\sigma}}$ and implies global existence.

\end{document}